\newtheorem{theorem}{Theorem}
\newtheorem{corollary}[theorem]{Corollary}
\newtheorem{definition}[theorem]{Definition}
\newtheorem{lemma}[theorem]{Lemma}
\newtheorem{remark}[theorem]{Remark}
\newenvironment{proof}[1][Proof]{\noindent\textbf{#1.} }{\ \rule{0.5em}{0.5em}}
\title{Computer assisted proof of Shil'nikov homoclinics: with application to the Lorenz-84 model}
\author{Maciej J. Capi\'nski \thanks{ Faculty of Applied Mathematics, AGH University of Science and Technology, al. Mickiewicza 30, 30-059
Krak\'ow,  Poland ({\tt maciej.capinski@agh.edu.pl})  supported by NCN Grant ***** } \and 
Anna Wasieczko-Zajac \thanks{Faculty of Applied Mathematics, AGH University of Science and Technology, al. Mickiewicza 30, 30-059
Krak\'ow,  Poland ({\tt wasieczk@agh.edu.pl})}}
\newcommand{\TheTitle}{Computer assisted proof of Shil'nikov homoclinics} 
\newcommand{\TheAuthors}{M. J. Capi\'nski, and A. Wasieczko-Zaj\k{a}c}
\title{Computer assisted proof of Shil'nikov homoclinics: with application to the Lorenz-84 model}
\author{Maciej J. Capi\'nski \thanks{ Faculty of Applied Mathematics, AGH University of Science and Technology, al. Mickiewicza 30, 30-059
Krak\'ow,  Poland ({\tt maciej.capinski@agh.edu.pl}) } \and 
Anna Wasieczko-Zaj\k{a}c \thanks{Faculty of Applied Mathematics, AGH University of Science and Technology, al. Mickiewicza 30, 30-059
Krak\'ow,  Poland ({\tt wasieczk@agh.edu.pl})}}
\begin{document}

\maketitle

% REQUIRED
\begin{abstract}
  We present a methodology for computer assisted proofs of Shil'nikov homoclinic intersections. It is based on geometric bounds on the invariant manifolds using rate conditions, and on propagating the bounds by an interval arithmetic integrator. Our method ensures uniqueness of the parameter for which the homoclinic takes place. We apply the method for the Lorenz-84 atmospheric circulation model, obtaining a sharp bound for the parameter, and also for where the homoclinic intersection of the stable/unstable manifolds takes place.
\end{abstract}
\bigskip
% REQUIRED
%\begin{keywords}
\noindent {\bf Key words.}  Shil'nikov homoclinic, invariant manifolds, non-transversal intersections, computer assisted proofs
%\end{keywords}

% REQUIRED
%\begin{AMS}
\medskip
\noindent {\bf AMS subject classifications.}
  34C37,  %	Homoclinic and heteroclinic solutions
  37D05, %  	Hyperbolic orbits and sets
  37D10, % 	Invariant manifold theory
  65G20.  %       Algorithms with automatic result verification
%\end{AMS}

%TCIDATA{Version=5.00.0.2606}
%TCIDATA{LaTeXparent=0,0,MMFedit.tex}

\section{Introduction}

A class of three dimensional systems with a homoclinic orbit for a three
dimensional saddle-focus equilibrium point was studied by Shil'nikov in a
series of papers (see for example \cite{ShilBif1}, \cite{ShilBif2}, \cite%
{ShilBif3}). The homoclinic (usually called the Shil'nikov homoclinic
orbit), can bifurcate in simple as well as in a chaotic way. The type of
bifurcation depends on the saddle quantity, a constant derived from the
eigenvalues of the linearised vector field at the fixed point. If the saddle
quantity is negative, then a unique and stable limit cycle bifurcates from
the homoclinic orbit. (This is called the simple Shil'nikov bifurcation.) If
it is negative, then there occurs infinitely many periodic orbits of saddle
type and one speaks of the chaotic Shil'nikov bifurcation (see also \cite%
{kuznetsov}). Shil'nikov homoclinics are important, since they lead to
interesting dynamics. For instance, when combined with the study of the
separatrix value, once can infer from them the existence of a Lorenz type
attractor in the system \cite{Turaev}.

Detecting Shil'nikov homoclinic intersections analytically is difficult,
since in most systems of interest the ODE does not have a closed-form
solution. In this paper we present a computer assisted approach for such
proofs. The method is based on computer assisted estimates on the stable and
unstable manifolds, and their propagation using rigorous, interval
arithmetic integrator along the flow.

Our estimates for the invariant manifolds are based on the method of `rate
conditions' from \cite{CZ-Meln, CZ-nhims}. These are related to the rate
conditions of Fenichel \cite{Fen1,Fen2,Fen3,Fen4}. The difference is that
our our rate conditions are derived based on the estimates on the derivative
at a (large) neighbourhood of a normally hyperbolic manifold (in this paper
this manifold will be a family of hyperbolic fixed points), and not at the
manifold as is done by Fenichel. Since our estimates are more global, we are
able to establish existence and obtain explicit bounds on the invariant
manifolds within the investigated neighbourhood.

The bounds on the manifolds are then propagated along the flow using
interval and arithmetic integrator. For the proof of a homoclinic
intersection, we use a standard shooting argument, which is based on the
Bolzano's intermediate value theorem. We also keep track of the dependence
of the manifolds on the parameter, which leads to a uniqueness argument for
the intersection.

To demonstrate that our method is applicable we implement it for the
Lorenz-84 system \cite{Lorenz}. We make a list of conditions that need to be
verified in order to obtain the existence and uniqueness of the
intersection, and then validate them. The bounds obtained by us are quite
sharp. We establish the intersection parameter with $10^{-9}$ order of
accuracy, and the region where the intersection takes place with $10^{-7}$
order of accuracy. The Lorenz-84 model serves only as an example. Our method
is general, and can be applied to other systems.

The only other computer assisted proof of Shil'nikov homoclinics known to us
is the work of Wilczak \cite{Wilczak}. This method uses a topological
shadowing mechanism, which stems from the method of `covering relations' 
\cite{GZ1, GZ2} (refered to also in literature as `correctly aligned
windows'), and Lyapunov function type arguments close to the fixed points.
Our method is different. We rely on explicit estimates on the manifolds and
their slopes, which are derived from rate conditions. Our method implies
that the intersection parameter is unique within the given range. The
uniqueness was not investigated in \cite{Wilczak}. In \cite{Wilczak} it is
shown that in the investigated system there is an infinite number of
Shil'nikov homoclinics, that are derived from symbolic dynamics. We focus on
a simpler setting where the intersection is unique.

The paper is organised as follows. Section \ref{sec:prel} contains
preliminaries. In section \ref{sec:L84} we introduce the Lorenz-84 model.
Section \ref{sec:Shiln} contains the proof for Shil'nikov type bifurcations.
The proof is based on an assumption that within the investigated
neighbourhood of the family of hyperbolic fixed points we have estimates on
their invariant manifolds. We discuss how to obtain such estimates in
section \ref{sec:Wu-bound}. This is based on the `rate conditions' method
from \cite{CZ-nhims, CZ-Meln}, adapted to our setting. In section \ref%
{sec:Wu-param} we extend the method to obtain bounds on the dependence of
the manifolds on the parameter of the system. Finally, in section \ref%
{sec:CAP}, we apply our method for the Lorenz-84 system.

\section{Preliminaries\label{sec:prel}}

\subsection{Notations}

Throughout the paper, all norms that appear are standard Euclidean norms. We
use a notation $B_k(p,R)$ to denote a ball in $\mathbb{R}^k$ of radius $R$
centered at $p$. We use a short hand notation $B_{k}\left( R\right) $ for a
ball or radius $R$ in $\mathbb{R}^{k}$ centered at zero. For a set $A\subset%
\mathbb{R}^{k}$, we use $\overline{A}$ to denote its closure and $\partial A$
for its boundary, $\mathrm{int}A$ for its interior and $A^{c}$ for the
complement. For a point $p=\left( x,y\right) $ we use a notation $\pi_{x}p$
and $\pi_{y}p$ to denote projections onto $x$ and $y$ coordinates,
respectively. We use the notation $(v|w)$ for the scalar product between two
vectors $v$ and $w$.

\subsection{Interval Newton method}

Let $X$ be a subset of $\mathbb{R}^{n}$. We shall denote by $[X]$ an
interval enclosure of the set $X$, that is, a set 
\begin{equation*}
\lbrack X]=\Pi_{i=1}^{n}[a_{i},b_{i}]\subset\mathbb{R}^{n},
\end{equation*}
such that 
\begin{equation*}
X\subset\lbrack X].
\end{equation*}

Let $f:\mathbb{R}^{n}\to\mathbb{R}^{n}$ be a $C^{1}$ function and $U\subset%
\mathbb{R}^{n}$. We shall denote by $[Df(U)]$ the interval enclosure of a
Jacobian matrix on the set $U$. This means that $[Df(U)]$ is an interval
matrix defined as 
\begin{equation*}
\lbrack Df(U)]=\left\{ A\in\mathbb{R}^{n\times n}|A_{ij}\in\left[ \inf_{x\in
U}\frac{df_{i}}{dx_{j}}(x),\sup_{x\in U}\frac{df_{i}}{dx_{j}}(x)\right] 
\text{ for all }i,j=1,\ldots,n\text{ }\right\} .
\end{equation*}

\begin{theorem}
\cite{Al}\label{th:Newton} (Interval Newton method) \label%
{th:interval-Newton} Let $f:\mathbb{R}^{n}\rightarrow\mathbb{R}^{n}$ be a $%
C^{1}$ function and $X=\Pi_{i=1}^{n}[a_{i},b_{i}]$ with $a_{i}<b_{i}$. If $%
[Df(X)]$ is invertible and there exists an $x_{0}$ in $X$ such that%
\begin{equation*}
N(x_{0},X):=x_{0}-\left[ Df(X)\right] ^{-1}f(x_{0})\subset X,
\end{equation*}
then there exists a unique point $x^{\ast}\in X$ such that $f(x^{\ast})=0.$
\end{theorem}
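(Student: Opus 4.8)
The plan is to get uniqueness from a mean value argument and existence from a topological degree computation along the linear homotopy from $f$ to its linearisation at $x_{0}$.

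For uniqueness, suppose $x^{\ast},y^{\ast}\in X$ both solve $f=0$. Since $X$, being a product of intervals, is convex, I can apply the one-variable mean value theorem to each component $t\mapsto f_{i}(x^{\ast}+t(y^{\ast}-x^{\ast}))$ along the segment $[x^{\ast},y^{\ast}]\subset X$, obtaining points $\xi_{i}\in X$ with $f_{i}(y^{\ast})-f_{i}(x^{\ast})=\nabla f_{i}(\xi_{i})\cdot(y^{\ast}-x^{\ast})$. Collecting the rows $\nabla f_{i}(\xi_{i})$ into a matrix $M$, each entry $M_{ij}=\frac{df_{i}}{dx_{j}}(\xi_{i})$ lies in the $(i,j)$ interval of $[Df(X)]$, so $M\in[Df(X)]$, while $M(y^{\ast}-x^{\ast})=f(y^{\ast})-f(x^{\ast})=0$. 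Invertibility of $[Df(X)]$ forces $M$ invertible, hence $x^{\ast}=y^{\ast}$.

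For existence, if $f(x_{0})=0$ there is nothing to prove, so I assume $f(x_{0})\neq0$. I would fix an invertible $C\in[Df(X)]$ (e.g.\ the midpoint of $[Df(X)]$, or $Df(x_{0})$, which lies in $[Df(X)]$ since $x_{0}\in X$) and consider
\begin{equation*}
H(x,t)=(1-t)\bigl(f(x_{0})+C(x-x_{0})\bigr)+t\,f(x),\qquad (x,t)\in X\times[0,1],
\end{equation*}
which is continuous as $f$ is $C^{1}$ and interpolates between the affine map $x\mapsto f(x_{0})+C(x-x_{0})$ at $t=0$ and $f$ at $t=1$. The crucial point: the componentwise mean value theorem again supplies, for each $x\in X$, a matrix $M_{x}\in[Df(X)]$ with $f(x)=f(x_{0})+M_{x}(x-x_{0})$, so substituting gives $H(x,t)=f(x_{0})+A_{x,t}(x-x_{0})$ with $A_{x,t}:=(1-t)C+tM_{x}$. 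Since $[Df(X)]$ is a product of intervals, hence convex, and $C,M_{x}\in[Df(X)]$, we get $A_{x,t}\in[Df(X)]$, which is therefore invertible; thus $H(x,t)=0$ forces $x=x_{0}-A_{x,t}^{-1}f(x_{0})\in x_{0}-[Df(X)]^{-1}f(x_{0})=N(x_{0},X)$, using that $[Df(X)]^{-1}f(x_{0})$ encloses $\{A^{-1}f(x_{0}):A\in[Df(X)]\}$.

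I would finish as follows. Reading $N(x_{0},X)\subset X$ as $N(x_{0},X)\subseteq\mathrm{int}\,X$ (contact with $\partial X$ is absorbed by slightly enlarging $X$), the previous step shows $H(\cdot,t)$ is zero-free on $\partial X$ for every $t$, so the Brouwer degree $\deg(H(\cdot,t),\mathrm{int}\,X,0)$ is defined and independent of $t$. At $t=0$ it equals $\operatorname{sign}\det C\neq0$, because the affine map has the single zero $x_{0}-C^{-1}f(x_{0})$, which lies in $N(x_{0},X)\subseteq\mathrm{int}\,X$. Hence $\deg(f,\mathrm{int}\,X,0)\neq0$ and $f$ vanishes somewhere in $X$, which with the uniqueness step proves the theorem. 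I expect the existence half to be the real obstacle: the matrices $M_{x}$ produced by the mean value theorem are neither canonical nor continuous in $x$, so a direct Brouwer fixed-point argument on a Newton-type self-map of $X$ is unavailable, and the degree/homotopy reformulation is what repairs this; the only remaining delicate point is the treatment of the boundary $\partial X$.
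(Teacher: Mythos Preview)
The paper does not prove this theorem at all: it is quoted from Alefeld \cite{Al} as a preliminary tool, with no argument supplied. So there is no ``paper's proof'' to compare against, and your proposal should be judged on its own merits.

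Your argument is essentially correct and is one of the standard routes to the interval Newton theorem. The uniqueness half is exactly the textbook proof: the componentwise mean value theorem produces a matrix $M\in[Df(X)]$ with $M(y^{\ast}-x^{\ast})=0$, and invertibility of every element of $[Df(X)]$ forces $y^{\ast}=x^{\ast}$. For existence, you correctly identify the obstruction---the mean-value matrices $M_{x}$ cannot be chosen continuously in $x$, so a naive Brouwer fixed-point argument on a Newton map fails---and you repair it with a homotopy/degree argument. The key observation that $A_{x,t}=(1-t)C+tM_{x}$ stays in the convex set $[Df(X)]$, hence remains invertible, is exactly what makes this work and shows that every zero of $H(\cdot,t)$ lies in $N(x_{0},X)$.

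The one soft spot you already flag is the boundary: the hypothesis is $N(x_{0},X)\subset X$, not $N(x_{0},X)\subset\mathrm{int}\,X$, so a zero of $H(\cdot,t)$ could in principle sit on $\partial X$ and the degree would be undefined. Your proposed fix of ``slightly enlarging $X$'' is not quite safe, because enlarging $X$ enlarges $[Df(X)]$ and may destroy the invertibility hypothesis. A cleaner patch is to argue by contradiction: if $f$ has no zero in $X$, then in particular $f=H(\cdot,1)$ is nonvanishing on $\partial X$; one then shows (using that $[Df(X)]$ is compact, convex, and bounded away from the singular matrices) that the set of $t$ for which $H(\cdot,t)$ vanishes on $\partial X$ is closed and does not contain $1$, and a short connectedness/degree argument on the complementary open interval containing $t=1$ still yields a zero of $f$. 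Alternatively, one simply notes that in every computer-assisted application the verified inclusion is strict, $N(x_{0},X)\subset\mathrm{int}\,X$, which is how the theorem is used in this paper.
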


The Interval Newton Method can be applied to find the eigenvalues and
eigenvectors of a matrix.

\subsection{Interval arithmetic enclosure for eigenvalues and eigenvectors 
\label{sec:eigenvalues}}

Let $A$ be an $n\times n$ real matrix. In this section we outline how to
solve%
\begin{equation}
Ax=\lambda x.  \label{eq:eigenvalue-equation}
\end{equation}
We consider two cases. In the first, both $\lambda$ and $x$ will be real,
and in the second 
\begin{align*}
\lambda & =\rho+i\omega, \\
x & =x_{\mathrm{re}}+ix_{\mathrm{im}},
\end{align*}
will be complex.

In the first case, we fix the first coordinate $x_{1}$ of $x=\left( x_{1},%
\tilde{x}\right) $ and treat $\tilde{x}\in\mathbb{R}^{n-1}$ as a variable.
(We can also set some other coordinate to be fixed, if needed.) We define $f:%
\mathbb{R}^{n}\rightarrow\mathbb{R}^{n}$ as%
\begin{equation*}
f\left( \lambda,\tilde{x}\right) =Ax-\lambda x.
\end{equation*}
We see that solving $f\left( \lambda,\tilde{x}\right) =0$ is equivalent to (%
\ref{eq:eigenvalue-equation}). A solution of $f\left( \lambda,\tilde {x}%
\right) =0$ can be established using the interval Newton method (Theorem \ref%
{th:Newton}).

In the second case, we can consider $x_{\mathrm{re}}=\left( x_{\mathrm{re}%
,1},\tilde{x}_{\mathrm{re}}\right) $ and $x_{\mathrm{im}}=\left( x_{\mathrm{%
im},1},\tilde{x}_{\mathrm{im}}\right) ,$ treating $\tilde {x}_{\mathrm{re}},%
\tilde{x}_{\mathrm{im}}\in\mathbb{R}^{n-1}$ as variables and $x_{\mathrm{re}%
,1},x_{\mathrm{im},1}$ as fixed parameters. (We can also fix some other
coordinate than the first, if needed.) We can consider $f:\mathbb{R}%
^{2n}\rightarrow\mathbb{R}^{2n}$ defined as%
\begin{equation*}
f\left( \rho,\tilde{x}_{\mathrm{re}},\omega,\tilde{x}_{\mathrm{im}}\right)
=\left( 
\begin{array}{c}
Ax_{\mathrm{re}}-\rho x_{\mathrm{re}}+\omega x_{\mathrm{im}} \\ 
Ax_{\mathrm{im}}-\rho x_{\mathrm{im}}-\omega x_{\mathrm{re}}%
\end{array}
\right) .
\end{equation*}
Clearly $f\left( \rho,\tilde{x}_{\mathrm{re}},\omega,\tilde{x}_{\mathrm{im}%
}\right) =0$ is equivalent to (\ref{eq:eigenvalue-equation}), and the
solution can again be established using the interval Newton method.

\subsection{Linear approximation of solutions of ODEs}

In this section we present a technical lemma. Consider an ODE 
\begin{equation*}
p^{\prime}=f(p),
\end{equation*}
where $f:\mathbb{R}^{n}\rightarrow\mathbb{R}^{n}$ is $C^{1}$. Let $\Phi_{t}$
be the flow of the above system.

\begin{lemma}
\label{lem:phi(p)-phi(q)} Let $U\subset\mathbb{R}^{n}$ be a convex compact
set. Then there exsist a constant $M>0$ such that for any $t>0$ and any $%
p,q\in\mathbb{R}^{n}$ satisfying 
%\marginpar{
%oslabilam zalozenie - to potem ma znaczenie w dowodzie Lematu 18; potrzebne
%tylko cofanie w czasie, wiec wywalilam $t\in\mathbb{R}$} 
\begin{equation*}
\{\Phi_{-s}(p),\Phi_{-s}(q):s\in\lbrack 0 ,t]\}\subset U,
\end{equation*}
we have 
\begin{equation*}
\Phi_{-t}(p)-\Phi_{-t}(q)=p-q-tC(p-q)+g(t,p,q),
\end{equation*}
for some matrix $C\in\left[ Df\left( U\right) \right] $ (which can depend on 
$p$, $q$ and $t$)%
%\marginpar{macierz $C$ zalezy tez od $t$} 
and some $g$ satisfying 
\begin{equation*}
\left\Vert g(t,p,q)\right\Vert \leq Mt^{2}\left\Vert p-q\right\Vert .
\end{equation*}
\end{lemma}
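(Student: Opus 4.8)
The plan is to reduce everything to the behaviour of the variational equation along the two backward trajectories. Fix $p,q$ with $\{\Phi_{-s}(p),\Phi_{-s}(q):s\in[0,t]\}\subset U$, and for $s\in[0,t]$ set $h(s)=\Phi_{-s}(p)-\Phi_{-s}(q)$. Since $\frac{d}{ds}\Phi_{-s}(r)=-f(\Phi_{-s}(r))$, we have $h'(s)=-\bigl(f(\Phi_{-s}(p))-f(\Phi_{-s}(q))\bigr)$, and because $U$ is convex the segment joining $\Phi_{-s}(p)$ and $\Phi_{-s}(q)$ lies in $U$; applying the mean value theorem in integral form gives $f(\Phi_{-s}(p))-f(\Phi_{-s}(q))=A(s)h(s)$ for $A(s)=\int_0^1 Df\bigl(\Phi_{-s}(q)+\theta h(s)\bigr)\,d\theta\in[Df(U)]$ (the interval hull is convex, so the average stays inside it). Thus $h'(s)=-A(s)h(s)$, a linear nonautonomous equation with $h(0)=p-q$.

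Next I would compare the solution of this linear equation with its first-order Taylor expansion at $s=0$. Write $g(t,p,q):=h(t)-h(0)+t\,C\,h(0)$ for a matrix $C$ to be chosen, and the goal is to pick $C\in[Df(U)]$ so that $\|g(t,p,q)\|\le Mt^2\|h(0)\|$. The natural choice is $C=\frac1t\int_0^t A(s)\,ds$, which lies in $[Df(U)]$ by convexity of the interval hull again. With this choice, integrating $h'(s)=-A(s)h(s)$ gives $h(t)=h(0)-\int_0^t A(s)h(s)\,ds$, so
\begin{equation*}
g(t,p,q)=h(t)-h(0)+\int_0^t A(s)\,ds\,h(0)=\int_0^t A(s)\bigl(h(0)-h(s)\bigr)\,ds.
\end{equation*}
Now I need two uniform bounds on $U$: let $L=\sup_{A'\in[Df(U)]}\|A'\|$ (finite since $[Df(U)]$ is a bounded interval matrix, because $f$ is $C^1$ and $U$ is compact). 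First, Gronwall applied to $h'(s)=-A(s)h(s)$ gives $\|h(s)\|\le e^{Ls}\|h(0)\|$. Second, $\|h(0)-h(s)\|=\bigl\|\int_0^s A(\tau)h(\tau)\,d\tau\bigr\|\le \int_0^s L e^{L\tau}\|h(0)\|\,d\tau=(e^{Ls}-1)\|h(0)\|$.

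Combining, $\|g(t,p,q)\|\le \int_0^t L\cdot(e^{Ls}-1)\|h(0)\|\,ds$. For the bound to be genuinely $O(t^2)$ one must be a little careful, since $e^{Ls}-1$ grows exponentially; the cleanest fix is to restrict attention to the regime that is actually used in the paper, or to absorb the exponential into the constant over the relevant compact time range — but the paper states the lemma for all $t>0$, so instead I would keep the estimate sharp: on $[0,t]$ we have $e^{Ls}-1\le Ls\,e^{Lt}$, hence $\|g(t,p,q)\|\le \frac{L^2 e^{Lt}}{2}t^2\|h(0)\|$. Since any fixed $t$ appearing in the application is bounded, one may set $M=\tfrac12 L^2 e^{LT}$ for the relevant time horizon $T$; if one truly wants a $t$-independent $M$ the statement should be read with $t$ ranging over a bounded set, which is the case in all later uses. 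The main obstacle is precisely this point — reconciling the claimed uniform quadratic bound with the exponential growth inherent in the variational equation — and I expect the intended reading is that $M$ may depend on the (bounded) time range under consideration, with the derivation above supplying the explicit constant.
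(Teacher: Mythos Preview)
Your proposal is correct and follows essentially the same route as the paper's proof: both introduce $A(s)=\int_0^1 Df\bigl(\Phi_{-s}(q)+\theta(\Phi_{-s}(p)-\Phi_{-s}(q))\bigr)\,d\theta$, take $C=\tfrac{1}{t}\int_0^t A(s)\,ds\in[Df(U)]$, and bound the remainder via Gronwall to reach $\|g\|\le \|p-q\|\int_0^t L(e^{Ls}-1)\,ds=(e^{Lt}-Lt-1)\|p-q\|$. Your caveat about the $t$-dependence of $M$ is well taken---the paper arrives at the identical exponential expression and simply asserts $\le Mt^2\|p-q\|$, so the intended reading is exactly the one you identify (bounded $t$), which is all that is used downstream where the lemma is applied only for small $t$.
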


\begin{proof}
The proof is given in the Appendix.
\end{proof}

\begin{remark} In Lemma \ref{lem:phi(p)-phi(q)} we move backwards in time along the flow. We set this up in this way, because later on in our application we will use the lemma in the context of unstable manifolds, where moving back in time along the flow we will converge towards a fixed point.
\end{remark}

\subsection{Logarithmic norms}

Let us begin with defining some matrix functionals that will be used by us
in further proofs. Let $\left\Vert \cdot\right\Vert $ be a given norm in $%
\mathbb{R}^{n}$. Let $A\in\mathbb{R}^{n\times n}$ be a square matrix. By $%
m(A)$ we will denote the following matrix functional: 
\begin{equation*}
m(A)=\min_{z\in\mathbb{R}^{n},\Vert z\Vert=1}\Vert Az\Vert.
\end{equation*}

\begin{definition}
\emph{The logarithmic norm of} $A$, denoted by $l(A)$ by \cite{L,D,HNW,KZ},
is defined as 
\begin{equation}
l(A)=\lim_{h\rightarrow0^{+}}\frac{\Vert I+hA\Vert-\Vert I\Vert}{h}.
\label{eq:def-log-norm}
\end{equation}
Moreover 
\begin{equation}
m_{l}(A)=\lim_{h\rightarrow0^{+}}\frac{m(I+hA)-\Vert I\Vert}{h}
\label{eq:def-ml}
\end{equation}
will be called \emph{the logarithmic minimum of} $A$.
\end{definition}

\begin{lemma}
\label{lem:norms-spectrum} If $\left\Vert \cdot\right\Vert $ is the
Euclidean norm, then the following equalities hold 
\begin{align}
l(A) & =\max\{\lambda\in\text{spectrum of }(A+A^{\top})/2\},
\label{eq:eucl-log-norm} \\
m_{l}(A) & =\min\{\lambda\in\text{spectrum of }(A+A^{\top})/2\}.
\label{eq:eucl-ml}
\end{align}
\end{lemma}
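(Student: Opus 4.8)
The plan is to prove the two equalities by characterising the one-sided derivatives in \eqref{eq:def-log-norm} and \eqref{eq:def-ml} via the operator norm induced by the Euclidean inner product. First I would recall that for the Euclidean norm $\Vert B\Vert = \sqrt{\lambda_{\max}(B^\top B)}$, i.e. the largest singular value of $B$, and $m(B) = \sqrt{\lambda_{\min}(B^\top B)}$, the smallest singular value. The key computation is to expand $(I+hA)^\top(I+hA) = I + h(A+A^\top) + h^2 A^\top A$ for small $h>0$. Writing $S = (A+A^\top)/2$ for the symmetric part, this is $I + 2hS + O(h^2)$, a symmetric matrix whose eigenvalues are, by standard first-order perturbation theory for symmetric matrices, $1 + 2h\mu_j + O(h^2)$, where $\mu_1 \le \cdots \le \mu_n$ are the eigenvalues of $S$.

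Next I would take square roots: the singular values of $I+hA$ are $\sqrt{1 + 2h\mu_j + O(h^2)} = 1 + h\mu_j + O(h^2)$. Hence $\Vert I+hA\Vert = 1 + h\mu_n + O(h^2)$ and $m(I+hA) = 1 + h\mu_1 + O(h^2)$. Since $\Vert I\Vert = 1$, substituting into \eqref{eq:def-log-norm} gives
\begin{equation*}
l(A) = \lim_{h\to 0^+}\frac{(1+h\mu_n+O(h^2)) - 1}{h} = \mu_n = \max\{\lambda \in \text{spectrum of } S\},
\end{equation*}
and substituting into \eqref{eq:def-ml} gives $m_l(A) = \mu_1 = \min\{\lambda \in \text{spectrum of } S\}$, which are exactly \eqref{eq:eucl-log-norm} and \eqref{eq:eucl-ml}.

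The main obstacle is making the perturbation estimate rigorous and uniform: I must justify that the eigenvalues of $I + 2hS + h^2 A^\top A$ really are $1 + 2h\mu_j + O(h^2)$ with the $O(h^2)$ controlled independently of $j$, and that taking the $\max$ (resp.\ $\min$) over $j$ commutes with the limit. The cleanest route is to avoid invoking perturbation theory as a black box and instead use the Courant--Fischer min-max characterisation directly: for the top eigenvalue, $\lambda_{\max}\big((I+hA)^\top(I+hA)\big) = \max_{\Vert z\Vert=1} \big( 1 + 2h (z|Sz) + h^2\Vert Az\Vert^2 \big)$, and one bounds this between $1 + 2h\mu_n$ and $1 + 2h\mu_n + h^2\Vert A\Vert^2$ by noting $(z|Sz)\le \mu_n$ with equality at the top eigenvector. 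Dividing by $h$ after taking the square root and letting $h\to0^+$ then yields the result without any appeal to differentiability of eigenvalues; the argument for $m_l$ is symmetric, using $\min_{\Vert z\Vert=1}$ and $(z|Sz)\ge \mu_1$. I would also remark that the limits in \eqref{eq:def-log-norm}--\eqref{eq:def-ml} exist (the difference quotients are monotone in $h$, a standard fact for logarithmic norms), so the one-sided limits are well defined.
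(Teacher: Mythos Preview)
Your argument is correct. The Courant--Fischer route you outline in the second half is clean and self-contained: the two-sided bound $1+2h\mu_n \le \lambda_{\max}\bigl((I+hA)^\top(I+hA)\bigr)\le 1+2h\mu_n+h^2\Vert A\Vert^2$ (and its analogue for $\lambda_{\min}$ with $\mu_1$) is exactly what is needed, and it avoids the eigenvalue-perturbation subtleties you flagged.

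For comparison: the paper does not actually prove this lemma. It states both identities and, in the Remark immediately following, cites \eqref{eq:eucl-log-norm} as a well-known result (referring to Hairer--N{\o}rsett--Wanner) and \eqref{eq:eucl-ml} as proven in \cite{CZ-Meln}. So your proposal supplies a proof where the paper only gives references; there is nothing to reconcile beyond that.
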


\begin{remark}
Equality (\ref{eq:eucl-log-norm}) is a well known result (see for instance 
\cite{HNW}). Equation (\ref{eq:eucl-ml}) is proven in \cite{CZ-Meln}.
\end{remark}

\begin{corollary}
\label{cor:m-to-l}From Lemma \ref{lem:norms-spectrum}, we see that $%
m_{l}(-A)=-l\left( A\right) .$
\end{corollary}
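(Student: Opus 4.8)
The plan is to reduce everything to the Euclidean spectral formulas established in Lemma \ref{lem:norms-spectrum}, since both $l$ and $m_l$ are expressed there purely in terms of the spectrum of the symmetrized matrix. The only algebraic facts needed are that symmetrization commutes with scalar multiplication and that negating a symmetric matrix negates its spectrum while reversing order.

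First I would write, using \eqref{eq:eucl-ml}, that
\begin{equation*}
m_l(-A)=\min\left\{\lambda\in\text{spectrum of }\tfrac{(-A)+(-A)^\top}{2}\right\}.
\end{equation*}
Then I would observe that $\tfrac{(-A)+(-A)^\top}{2}=-\tfrac{A+A^\top}{2}$, and that for any real symmetric matrix $S$ one has $\text{spectrum of }(-S)=\{-\mu:\mu\in\text{spectrum of }S\}$ (this is immediate from $Sv=\mu v\iff (-S)v=(-\mu)v$, and it is the only "computation" in the argument). Applying this with $S=\tfrac{A+A^\top}{2}$ gives
\begin{equation*}
m_l(-A)=\min\left\{-\mu:\mu\in\text{spectrum of }\tfrac{A+A^\top}{2}\right\}=-\max\left\{\mu:\mu\in\text{spectrum of }\tfrac{A+A^\top}{2}\right\}.
\end{equation*}

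Finally I would identify the right-hand side with $-l(A)$ via \eqref{eq:eucl-log-norm}, completing the proof. There is essentially no obstacle here: the statement is a one-line consequence of the two displayed equalities in Lemma \ref{lem:norms-spectrum} together with the elementary behaviour of spectra under negation; the only thing to be careful about is the swap of $\min$ and $\max$ when factoring out the minus sign.
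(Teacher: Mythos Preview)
Your proposal is correct and is exactly the argument the paper has in mind: the corollary is stated without proof as an immediate consequence of the two spectral formulas \eqref{eq:eucl-log-norm} and \eqref{eq:eucl-ml} in Lemma \ref{lem:norms-spectrum}, and your computation simply spells out that one-line deduction (negation of the symmetrized matrix negates the spectrum and swaps $\min$ with $\max$).
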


\begin{lemma}
\cite{CZ-Meln} \label{lem:norn-l-approx}Consider the Euclidean norm $%
\left\Vert \cdot\right\Vert $. Let $W\subset\mathbb{R}^{n\times n}$ be a
compact set and let $t_{0}>0$. Then for any $t\in(0,t_{0}]$ and $A\in W$ the
following equality holds 
\begin{equation*}
\Vert I+tA\Vert=1+tl(A)+r(t,A),
\end{equation*}
where 
\begin{equation*}
\Vert r(t,A)\Vert\leq Ct^{2},
\end{equation*}
for some constant $C=C(t_{0},W)$.
\end{lemma}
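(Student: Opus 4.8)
The plan is to exploit the special structure of the Euclidean operator norm, namely $\|M\|=\sqrt{\lambda_{\max}(M^{\top}M)}$, together with the characterisation of $l(A)$ from Lemma~\ref{lem:norms-spectrum}. Write $S=(A+A^{\top})/2$ for the symmetric part of $A$, so that $l(A)=\lambda_{\max}(S)$. Expanding the Gram matrix gives
\[
(I+tA)^{\top}(I+tA)=I+2tS+t^{2}A^{\top}A,
\]
hence $\|I+tA\|^{2}=\lambda_{\max}\!\left(I+2tS+t^{2}A^{\top}A\right)$, where we use that the spectral norm squared equals the largest eigenvalue of the (symmetric) Gram matrix.

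First I would establish the two-sided bound
\[
1+2tl(A)\le\|I+tA\|^{2}\le 1+2tl(A)+R^{2}t^{2},\qquad t\ge 0,
\]
where $R:=\sup_{A\in W}\|A\|$ is finite by compactness of $W$ (if $R=0$ then $W=\{0\}$ and the statement is trivial with $r\equiv 0$). By Courant--Fischer, $\lambda_{\max}\!\left(I+2tS+t^{2}A^{\top}A\right)=\max_{\|v\|=1}\bigl(1+2t(v|Sv)+t^{2}\|Av\|^{2}\bigr)$. Taking $v$ a unit top eigenvector of $S$ and discarding the nonnegative term $t^{2}\|Av\|^{2}$ yields the lower bound; using $\|Av\|^{2}\le R^{2}$ uniformly and then maximising $1+2t(v|Sv)$ over unit $v$ (via $\max_{\|v\|=1}(v|Sv)=\lambda_{\max}(S)=l(A)$) yields the upper bound. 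Thus $\|I+tA\|^{2}=1+2tl(A)+\rho(t,A)$ with $0\le\rho(t,A)\le R^{2}t^{2}$.

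It remains to take the square root and control the error, which is the only place some care is needed. For $t$ bounded away from $0$, say $t\in[\delta,t_{0}]$, the map $(t,A)\mapsto\|I+tA\|-1-tl(A)$ is continuous (eigenvalues, hence $l(A)$, depend continuously on the matrix) on a compact set, hence bounded by some $K=K(t_{0},W,\delta)$, so $|r(t,A)|\le K\le(K/\delta^{2})t^{2}$ there. For small $t$ I would choose $\delta\le\min(t_{0},1/(4R))$, so that by the lower bound $\|I+tA\|^{2}\ge 1-2Rt\ge 1/2$ on $(0,\delta]$. Writing $u=2tl(A)+\rho(t,A)$ we then have $1+u=\|I+tA\|^{2}\ge 1/2$ and $|u|\le(2R+R^{2}t_{0})t=:\Lambda t$, using $|l(A)|\le\|S\|\le R$. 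Taylor's theorem with Lagrange remainder applied to $\phi(u)=\sqrt{1+u}$, namely $\phi(u)=1+u/2+\tfrac12\phi''(\xi)u^{2}$ with $\phi''(\xi)=-\tfrac14(1+\xi)^{-3/2}$ and $1+\xi\ge 1/2$, gives $\bigl|\sqrt{1+u}-1-u/2\bigr|\le\tfrac{\sqrt2}{4}\Lambda^{2}t^{2}$. Since $\|I+tA\|-1-tl(A)=\bigl(\sqrt{1+u}-1-u/2\bigr)+\rho(t,A)/2$ and $0\le\rho(t,A)/2\le R^{2}t^{2}/2$, we get $|r(t,A)|\le\bigl(\tfrac{\sqrt2}{4}\Lambda^{2}+\tfrac12 R^{2}\bigr)t^{2}$ on $(0,\delta]$; taking $C$ to be the larger of this constant and $K/\delta^{2}$ finishes the proof. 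The one genuinely structural step is the parabola-envelope estimate in the second paragraph; everything after it is elementary real analysis, with the mild subtlety being to keep $1+u$ bounded away from zero, which is why the range $t\in(0,t_{0}]$ is split at $\delta$.
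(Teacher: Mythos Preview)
Your proof is correct. The paper itself does not prove this lemma; it is quoted from \cite{CZ-Meln} and used as a black box, so there is no in-paper argument to compare against. Your approach---expanding the Gram matrix $(I+tA)^{\top}(I+tA)=I+2tS+t^{2}A^{\top}A$, sandwiching its top eigenvalue between $1+2t\,l(A)$ and $1+2t\,l(A)+R^{2}t^{2}$ via Courant--Fischer, and then controlling the square root with a Taylor remainder---is clean and self-contained. The only delicate point, keeping $1+u$ bounded away from zero so the square-root expansion is uniform, you handle correctly by splitting at $\delta=\min(t_{0},1/(4R))$ and invoking compactness on $[\delta,t_{0}]\times W$ for the remaining range. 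This is essentially the natural proof one would expect for the Euclidean case, exploiting exactly the spectral characterisation of $l(A)$ recorded in Lemma~\ref{lem:norms-spectrum}.
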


\begin{lemma}
\label{lem:m-to-ml}\cite{CZ-Meln} Consider the Euclidean norm $\left\Vert
\cdot\right\Vert $. Let $W\subset\mathbb{R}^{n\times n}$ be a compact set
and let $t_{0}>0$. Then for any $t\in(0,t_{0}]$ and $A\in W$ the following
equality holds 
\begin{equation*}
m(I+tA)=1+tm_{l}(A)+r(t,A)
\end{equation*}
where 
\begin{equation*}
\Vert r(t,A)\Vert\leq Ct^{2}
\end{equation*}
for some constant $C=C(t_{0},W)$.
\end{lemma}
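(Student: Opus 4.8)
The plan is to reduce the statement to the Euclidean identity $m(B)=\sqrt{\lambda_{\min}(B^\top B)}$ (the smallest singular value of $B$), apply it with $B=I+tA$, and compare the result with $(1+t\,m_l(A))^2$. Write $S=(A+A^\top)/2$, so that by Lemma~\ref{lem:norms-spectrum} one has $m_l(A)=\lambda_{\min}(S)$, and
\[
(I+tA)^\top(I+tA)=I+2tS+t^2A^\top A .
\]
Let $\mu_{\min}(t)$ denote the smallest eigenvalue of this symmetric matrix, so $m(I+tA)=\sqrt{\mu_{\min}(t)}$, and by the Rayleigh quotient characterisation
\[
\mu_{\min}(t)=\min_{\|z\|=1}\bigl(1+2t(Sz|z)+t^2\|Az\|^2\bigr).
\]

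First I would bound $\mu_{\min}(t)$ from both sides for $t\ge 0$. Since $t^2\|Az\|^2\ge 0$ and $(Sz|z)\ge\lambda_{\min}(S)=m_l(A)$ for $\|z\|=1$, the bracketed expression is $\ge 1+2t\,m_l(A)$, hence $\mu_{\min}(t)\ge 1+2t\,m_l(A)$; testing with a unit eigenvector $z_0$ of $S$ at $\lambda_{\min}(S)$ gives $\mu_{\min}(t)\le 1+2t\,m_l(A)+t^2\|Az_0\|^2\le 1+2t\,m_l(A)+t^2\|A\|^2$. On the other hand $(1+t\,m_l(A))^2=1+2t\,m_l(A)+t^2 m_l(A)^2$, and $|m_l(A)|=|\lambda_{\min}(S)|\le\|S\|\le\|A\|$. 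Thus both $\mu_{\min}(t)$ and $(1+t\,m_l(A))^2$ lie in the interval $[\,1+2t\,m_l(A),\;1+2t\,m_l(A)+t^2\|A\|^2\,]$, so for all $t\ge 0$ and $A\in W$
\[
\bigl|\mu_{\min}(t)-(1+t\,m_l(A))^2\bigr|\le K t^2,\qquad K:=\sup_{A\in W}\|A\|^2<\infty,
\]
the supremum being finite by compactness of $W$.

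Next I would pass from the squares back to $m(I+tA)$. Fix $\delta\in(0,t_0]$ small enough that $\delta\sup_{A\in W}|m_l(A)|\le\tfrac12$ (possible since $|m_l(A)|\le\|A\|$ is bounded on $W$); then $1+t\,m_l(A)\ge\tfrac12>0$ for $t\in(0,\delta]$, and writing $r(t,A)=m(I+tA)-1-t\,m_l(A)$ we have
\[
r(t,A)=\frac{\mu_{\min}(t)-(1+t\,m_l(A))^2}{\,m(I+tA)+1+t\,m_l(A)\,}.
\]
The denominator is $\ge 0+\tfrac12=\tfrac12$, so $|r(t,A)|\le 2Kt^2$ on $(0,\delta]\times W$. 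For $t\in[\delta,t_0]$ the function $(t,A)\mapsto r(t,A)$ is continuous (the smallest eigenvalue of a symmetric matrix, hence $B\mapsto m(B)=\sqrt{\lambda_{\min}(B^\top B)}$ and $A\mapsto m_l(A)$, are continuous), hence bounded by some $C_1$ on the compact set $[\delta,t_0]\times W$; since $t^2\ge\delta^2$ there, $|r(t,A)|\le C_1\delta^{-2}t^2$. Taking $C:=\max\{2K,C_1\delta^{-2}\}$ completes the argument.

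The eigenvalue algebra is routine; the one delicate point is that $m(I+tA)$ is merely Lipschitz, not differentiable, at parameters where $\mu_{\min}(t)$ vanishes, so one cannot differentiate $\sqrt{\,\cdot\,}$ at $t=0$ uniformly in $A\in W$. The remedy above is to work with the squares and to split the parameter range: on small $t$ the term $1+t\,m_l(A)$ keeps the denominator of the difference quotient bounded away from $0$, while on $t\in[\delta,t_0]$ the estimate $|r|\le Ct^2$ is obtained crudely from boundedness of $r$ and $t^2\ge\delta^2$. This follows the proof in \cite{CZ-Meln}, and it mirrors the proof of Lemma~\ref{lem:norn-l-approx}, with $\|I+tA\|^2=\lambda_{\max}((I+tA)^\top(I+tA))$ and $l(A)=\lambda_{\max}(S)$ replacing the corresponding minima.
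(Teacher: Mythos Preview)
The paper does not actually prove this lemma; it is merely quoted from \cite{CZ-Meln}, so there is no in-paper argument to compare against. Your proof is correct and self-contained: the identification $m(I+tA)^2=\lambda_{\min}(I+2tS+t^2A^\top A)$, the two-sided sandwich of this quantity and of $(1+t\,m_l(A))^2$ in the interval $[1+2t\,m_l(A),\,1+2t\,m_l(A)+t^2\|A\|^2]$, and the difference-of-squares trick with the denominator bounded below by $\tfrac12$ on $(0,\delta]$ are all valid. The split $(0,\delta]\cup[\delta,t_0]$ cleanly handles the uniformity over $W$ without differentiating the square root, and your remark that this mirrors the corresponding argument for Lemma~\ref{lem:norn-l-approx} (with $\lambda_{\max}$ and $l(A)$ in place of $\lambda_{\min}$ and $m_l(A)$) is exactly the intended parallel.
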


\begin{figure}[ptb]
\begin{center}
\includegraphics[height=5cm]{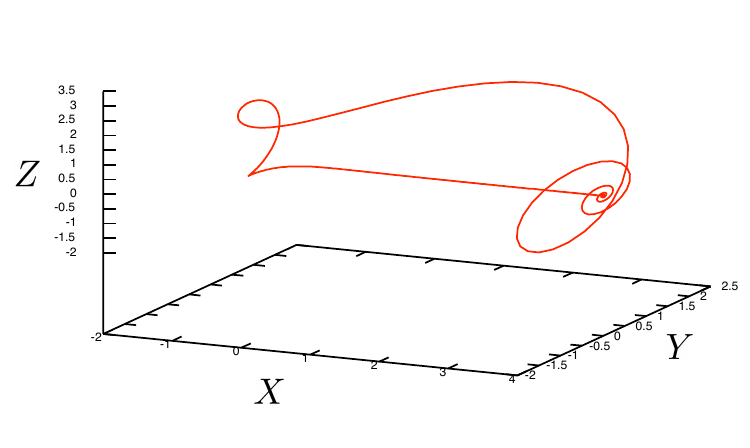}
\end{center}
\caption{{}The Shil'nikov homoclinic in the Lorenz-84 model for $F=4.0$ and $%
G\simeq0.08$.}
\label{fig:homoclinic}
\end{figure}

\section{Lorenz-84 Atmospheric Circulation Model\label{sec:L84}}

The Lorenz-84 Model was introduced by Lorenz in \cite{Lorenz}. It is a
low-order model for the long-term atmospheric circulation. It is considered
as the simplest model capable of representing the basic features of the
so-called Hadley circulation. Therefore, it has been widely used in
meteorogical studies. The detailed analysis of this model can be found in 
\cite{Veen}. The model equations are%
\begin{equation}  \label{eq:L84}
\left\{ 
\begin{array}{rl}
\dot{X} & =-Y^{2}-Z^{2}-aX+aF, \\ 
\dot{Y} & =XY-bXZ-Y+G, \\ 
\dot{Z} & =bXY+XZ-Z,%
\end{array}
\right.
\end{equation}
where variable $X$ represents the strength of the globally averaged westerly
wind current and variables $Y$ and $Z$ are the strength of the cosine and
sine phases of a chain of superposed waves transporting heat poleward. $F$
and $G$ represent the thermal forcing terms, and the parameter $b$ stands
for the advection strength of the waves by the westerly wind current. The
coefficient $a$, if less than 1, allows the westerly wind current to damp
less rapidly than the waves. The time unit is equal to the damping time of
the waves and it is estimated to be five days.

In their paper \cite{Shil}, A.Shil'nikov, G.Nicolis and C.Nicolis carry out
a detailed bifurcation analysis for the Lorenz-84 Model with parameters $a$
and $b$ set to classical values $\frac{1}{4}$ and $4$ respectively (these
values were also considered in many other works, see for example \cite%
{broer-simo}, \cite{Lorenz}, \cite{Lorenz2}). The authors identify the types
of the equilibrium points depending on the choice of the domain for the
parameters $F$ and $G$. They show that the problem has either one, two or
three equilibrium points. If parameters $F$ and $G$ are chosen from a proper
domain, one of the fixed points, denoted in \cite{Shil} as $O_{1}$, is
saddle-focus. The paper \cite{Shil} presents a numerical calculations
suggesting the existence of the homoclinic orbit passing through $O_{1}$
that is possesed by the system for $F\simeq4.0$ and $G\simeq0.08$. The
homoclinic is depicted in Figure \ref{fig:homoclinic}.

Following Shil'nikov et al. \cite{Shil} we set parameters $a=\frac{1}{4}$
and $b=4$. In further sections we will use our method to rigourously enclose
the stable and unstable manifolds, and to validate the existence of a
homoclinic orbit for saddle-focus fixed point $O_{1}$. We prove that such an
orbit exists for $F=4$, and some $G$, where 
\begin{equation}
G\in\left[ 0.0752761095,0.07527611625 \right] .  \label{eq:G-estim}
\end{equation}
Moreover, we show the uniqueness of such $G$ in the interval (\ref%
{eq:G-estim}).

\section{Establishing Shil'nikov homoclinics\label{sec:Shiln}}

Let us consider the three dimensional system given by the following ODE 
\begin{equation}
p^{\prime}=f(p,\theta),  \label{eq:ode_Shil}
\end{equation}
where $f:\mathbb{R}^{3}\times\mathbb{R}\rightarrow\mathbb{R}^{3}$ is $C^{1}$%
, and $\theta\in\Theta$ is a parameter, with $\Theta=[\theta_{l},\theta
_{r}]\subset\mathbb{R}$. Let $\Phi_{t}(p,\theta)$ be the flow induced by (%
\ref{eq:ode_Shil}).

Suppose that for $\theta\in\Theta$, system \ref{eq:ode_Shil} has a smooth
family of hyperbolic fixed points $p_{\theta}^{\ast}$, with two dimensional
stable and one dimensional unstable eigenspace. 
\begin{figure}[ptb]
\begin{center}
\includegraphics[height=3cm]{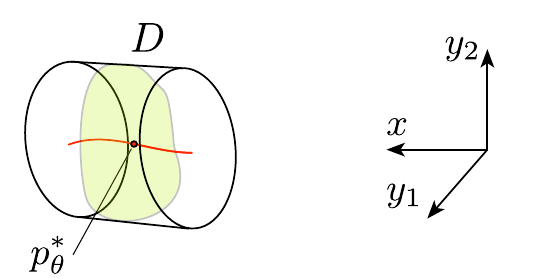}
\end{center}
\caption{The local unstable manifold $W_{\protect\theta}^{u}$ in red, and
the local stable manifold $W_{\protect\theta}^{s}$ in green.}
\label{fig:D-setup}
\end{figure}

Below we present a theorem, which allow us to prove the existence of a
homoclinic orbit in the system. First we need to introduce some notation.

Let $\overline{B}_{u}\left( R\right) =\left[ -R,R\right] \subset \mathbb{R}$%
, $\overline{B}_{s}\left( R\right) \subset\mathbb{R}^{2}$ and let%
\begin{equation*}
D=\overline{B}_{u}\left( R\right) \times\overline{B}_{s}\left( R\right)
\subset\mathbb{R}^{3},
\end{equation*}
be a neighborhood of the smooth family of fixed points, meaning
that we assume $p_{\theta}^{\ast}\in\mathrm{int}D$ for any $\theta\in\Theta$%
. The set $D$ will be fixed throughout the discussion. We denote by $%
W_{\theta}^{u}$ the local unstable manifold of $p_{\theta}^{\ast}$
in $D$ and by $W_{\theta}^{s}$ the local stable manifold of $%
p_{\theta}^{\ast}$ in $D$, i.e.%
\begin{align}
W_{\theta}^{u} & =\left\{ p\in D:\Phi_{t}\left( p,\theta\right) \in D\text{
for }t\leq0\text{ and }\lim_{t\rightarrow-\infty}\Phi_{t}\left(
p,\theta\right) =p_{\theta}^{\ast}\right\} ,  \label{eq:Wu-def} \\
W_{\theta}^{s} & =\left\{ p\in D:\Phi_{t}\left( p,\theta\right) \in D\text{
for }t\geq0\text{ and }\lim_{t\rightarrow+\infty}\Phi_{t}\left(
p,\theta\right) =p_{\theta}^{\ast}\right\} .  \label{eq:Ws-def}
\end{align}
We assume that $W_{\theta}^{u}$ and $W_{\theta}^{s}$ are graphs of $C^{1}$
functions%
\begin{align*}
w^{u} & :\overline{B}_{u}\left( R\right) \times\Theta\rightarrow \overline{B}%
_{s}\left( R\right) , \\
w^{s} & :\overline{B}_{s}\left( R\right) \times\Theta\rightarrow \overline{B}%
_{u}\left( R\right) ,
\end{align*}
meaning that (see Figure \ref{fig:D-setup})%
\begin{align}
W_{\theta}^{u} & =\left\{ \left( x,w^{u}\left( x,\theta\right) \right) :x\in%
\overline{B}_{u}\left( R\right) \right\} ,  \notag \\
W_{\theta}^{s} & =\left\{ \left( w^{s}\left( y,\theta\right) ,y\right) :y\in%
\overline{B}_{s}\left( R\right) \right\} .  \label{eq:Ws-graph}
\end{align}

\begin{figure}[ptb]
\begin{center}
\includegraphics[height=5cm]{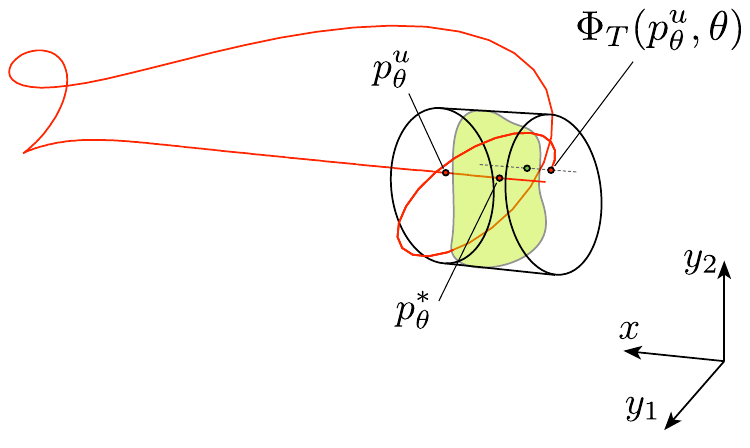}
\end{center}
\caption{{}We have the $1$-dimensional unstable manifold of $p_{\protect%
\theta}^{\ast }$ in red, and the $2$-dimensional local stable manifold $W_{%
\protect\theta}^{s}$ in $D$ in green. The $h\left( \protect\theta\right) $
is the signed distance along the $x$ coordinate between $W_{\protect\theta%
}^{s}$ and $\Phi_{T}\left( p_{\protect\theta}^{u},\protect\theta\right) $;
this is the distance along the dotted line on the plot. }
\label{fig:homoclinic-2}
\end{figure}

Let 
\begin{equation}
p_{\theta}^{u}:=\left( R,w^{u}\left( R,\theta\right) \right) \in \mathbb{R}%
^{3}.  \label{eq:pu-theta-def}
\end{equation}
Consider $T>0$ and assume that for all $\theta\in\Theta$, $\Phi_{T}\left(
p_{\theta}^{u},\theta\right) \in D$. Let us define%
\begin{equation*}
h:\Theta\rightarrow\mathbb{R},
\end{equation*}
as%
\begin{equation}
h\left( \theta\right) =\pi_{x}\Phi_{T}\left( p_{\theta}^{u},\theta\right)
-w_{\theta}^{s}(\pi_{y}\Phi_{T}\left( p_{\theta}^{u},\theta\right) ).
\label{eq:h-function-def}
\end{equation}

We now state a natural result, that $h\left( \theta\right) =0$ implies an
intersection of the stable and unstable manifolds of $p_{\theta}^{\ast}$.
(See Figure \ref{fig:homoclinic-2}.)

\begin{theorem}
\label{th:homoclinic-existence}If 
\begin{equation}
h(\theta_{l})<0\qquad\text{and}\qquad h(\theta_{r})>0
\label{eq:Bolzano-assmpt}
\end{equation}
then there exists a $\psi\in\Theta$ for which we have a homoclinic orbit to $%
p_{\psi}^{\ast}$.

Moreover, if for all $\theta\in\Theta$, $h^{\prime}(\theta)>0$, then $\psi$
is the only parameter for which we have a homoclinic orbit satisfying $%
\Phi_{t}\left( p_{\theta}^{u},\theta\right) \in D$ for all $t>T$.
\end{theorem}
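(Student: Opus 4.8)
The plan is to split the statement into an existence part and a uniqueness part, both of which rest on elementary one-variable calculus applied to the map $h:\Theta\to\mathbb{R}$.

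For existence, I would first argue that $h$ is continuous on $\Theta$. This follows because $\Phi_T(\cdot,\cdot)$ is continuous (the flow of a $C^1$ vector field depends continuously on initial conditions and parameters), $p_\theta^u=(R,w^u(R,\theta))$ depends continuously on $\theta$ since $w^u$ is assumed $C^1$, the projections $\pi_x,\pi_y$ are linear, and $w^s$ is $C^1$ hence continuous. Then by Bolzano's intermediate value theorem applied to the continuous function $h$ on the interval $\Theta=[\theta_l,\theta_r]$, the hypothesis $h(\theta_l)<0<h(\theta_r)$ gives a $\psi\in(\theta_l,\theta_r)$ with $h(\psi)=0$. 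It then remains to check that $h(\psi)=0$ actually encodes a homoclinic orbit. Unwinding the definition \eqref{eq:h-function-def}: $h(\psi)=0$ says that $\pi_x\Phi_T(p_\psi^u,\psi)=w^s_\psi(\pi_y\Phi_T(p_\psi^u,\psi))$, i.e. the point $q:=\Phi_T(p_\psi^u,\psi)$ satisfies $q=(w^s_\psi(\pi_y q),\pi_y q)$, which by the graph description \eqref{eq:Ws-graph} of $W_\psi^s$ means $q\in W_\psi^s$. Since $q=\Phi_T(p_\psi^u,\psi)$ with $p_\psi^u\in W_\psi^u$, and $W_\psi^u,W_\psi^s$ are (forward/backward) invariant pieces of the stable/unstable manifolds of $p_\psi^\ast$, the full orbit through $q$ is backward asymptotic to $p_\psi^\ast$ (tracking it back through $p_\psi^u$ into $W_\psi^u$, where by \eqref{eq:Wu-def} it converges to $p_\psi^\ast$ as $t\to-\infty$) and forward asymptotic to $p_\psi^\ast$ (since $q\in W_\psi^s$ and by \eqref{eq:Ws-def} its forward orbit stays in $D$ and converges to $p_\psi^\ast$). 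Hence it is a homoclinic orbit to $p_\psi^\ast$.

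For uniqueness under the extra hypothesis $h'(\theta)>0$ for all $\theta\in\Theta$: a function with everywhere positive derivative on an interval is strictly increasing, hence injective, so it has at most one zero; thus $\psi$ is the unique zero of $h$ in $\Theta$. The only subtlety is matching this to the claimed uniqueness of the homoclinic parameter: I must show that any $\theta\in\Theta$ for which there is a homoclinic orbit satisfying $\Phi_t(p_\theta^u,\theta)\in D$ for all $t>T$ forces $h(\theta)=0$. Given such a homoclinic orbit, its backward-time portion lies in $W_\theta^u$; since $W_\theta^u$ is one-dimensional and $p_\theta^u=(R,w^u(R,\theta))$ is its endpoint on $\{x=R\}\subset\partial D$, the orbit must pass through $p_\theta^u$ (any orbit in $W_\theta^u$ accumulating on $p_\theta^\ast$ backward and leaving a neighborhood of it must cross this boundary face), so after a time shift we may take $p_\theta^u$ on the orbit; then $\Phi_T(p_\theta^u,\theta)$ lies on the same orbit, which is forward asymptotic to $p_\theta^\ast$ and (by assumption) stays in $D$ for $t>T$, so $\Phi_T(p_\theta^u,\theta)\in W_\theta^s$, giving $h(\theta)=0$. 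Combined with strict monotonicity of $h$, this yields $\theta=\psi$.

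I expect the main obstacle to be the bookkeeping in this last step — pinning down precisely why the homoclinic orbit must pass through the specific point $p_\theta^u$ and why the condition $\Phi_t(p_\theta^u,\theta)\in D$ for $t>T$ is exactly what is needed to place $\Phi_T(p_\theta^u,\theta)$ in the local stable manifold $W_\theta^s$ as defined by \eqref{eq:Ws-def}. Everything else (continuity of $h$, Bolzano, strict monotonicity from a positive derivative) is routine; the content is entirely in correctly translating ``$h(\theta)=0$'' back and forth with ``a homoclinic orbit of the prescribed type exists,'' using the graph representations of $W_\theta^u$ and $W_\theta^s$ and their invariance.
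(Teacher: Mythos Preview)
Your proposal is correct and follows essentially the same route as the paper: continuity of $h$, Bolzano, then unpacking $h(\psi)=0$ via the graph description of $W_\psi^s$ to land $q=\Phi_T(p_\psi^u,\psi)$ on both invariant manifolds; for uniqueness, strict monotonicity of $h$ gives a unique zero. The one place you do more work than necessary is the final bookkeeping: the paper argues the contrapositive directly, observing that for $\theta\neq\psi$ we have $\Phi_T(p_\theta^u,\theta)\notin W_\theta^s$, and then the very definition \eqref{eq:Ws-def} forces either $\Phi_t(p_\theta^u,\theta)\notin D$ for some $t>T$ or the forward orbit fails to converge to $p_\theta^\ast$---either way ruling out a homoclinic of the stated type. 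Since the uniqueness claim is phrased in terms of the specific orbit of $p_\theta^u$, you do not need the separate argument that an arbitrary homoclinic must pass through $p_\theta^u$.
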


\begin{proof}
Since $w^{u}$, $w^{s}$ are $C^{1}$, also is $h$. From (\ref%
{eq:Bolzano-assmpt}), by the Bolzano intermediate value theorem, it follows
that there exists a $\psi \in \Theta $ for which $h\left( \psi \right) =0$.
Let $q=\Phi _{T}(p_{\psi }^{u},\psi )$. Since $h\left( \psi \right) =0,$ 
\begin{equation}
q=\left( \pi _{x}q,\pi _{y}q\right) =\left( w_{\psi }^{s}(\pi _{y}q),\pi
_{y}q\right) .  \label{eq:q-on-manif}
\end{equation}%
Since $p_{\psi }^{u}\in W^{u},$ clearly $q=\Phi _{T}( p_{\psi }^{u},\psi ) $
belongs to the unstable manifold of $p_{\psi }^{\ast }$. All points of the
form $( w_{\psi }^{s}(y),y) $ belong to the stable manifold of $p_{\psi
}^{\ast }$, hence by (\ref{eq:q-on-manif}) so does $q$, and the stable and
unstable manifolds intersect at $q$.

If $h^{\prime }\left( \theta \right) >0$ for all $\theta \in \Theta $, then $%
\psi $ is the only parameter for which $h$ is zero, hence for all $\theta
\neq \psi $, 
\begin{equation*}
\pi _{x}\Phi _{T}\left( p_{\theta }^{u},\theta \right) \neq w_{\theta
}^{s}(\pi _{y}\Phi _{T}\left( p_{\theta }^{u},\theta \right) ).
\end{equation*}%
This by (\ref{eq:Ws-graph}), implies that for $\theta \neq \psi $, $\Phi
_{T}\left( p_{\theta }^{u},\theta \right) \notin W_{\theta }^{s}$. By (\ref%
{eq:Ws-def}) this means that for some $t>T$, $\Phi _{t}\left( p_{\theta
}^{u},\theta \right) \notin D$, or that we do not have a homoclinic for this
parameter.
\end{proof}

\begin{remark}
The inequalities in (\ref{eq:Bolzano-assmpt}) and the sign of $h^{\prime}$
in Theorem \ref{th:homoclinic-existence} can be reversed. Then the proof
follows from mirror arguments.
\end{remark}

To apply Theorem \ref{th:homoclinic-existence}, we need to be able to
compute estimates on $h$ and its derivative. We note that obtaining a
rigorous bound on a time shift map $\Phi_{T}$ along the flow, and on its
derivative, can be computed in interval arithmetic using the CAPD\footnote{%
computer assisted proofs in dynamics: http://capd.ii.uj.edu.pl/} package. To
compute $h$ and its derivative it is therefore enough to be able to obtain
estimates on $w^{u},$ $w^{s}$ and their derivatives. We discuss how this can
be achieved in interval arithmetic in subsequent sections \ref{sec:Wu-bound}
and \ref{sec:Wu-param}. We use these, together with Theorem \ref%
{th:homoclinic-existence}, to provide a computer assisted proof of a
homoclinic intersection in the Lorenz-84 model, in section \ref{sec:CAP}.

\section{Bounds on unstable manifolds of hyperbolic fixed points\label%
{sec:Wu-bound}}

Consider an ODE%
\begin{equation}
q^{\prime}=f\left( q\right) ,  \label{eq:ode-no-param}
\end{equation}
and let%
\begin{equation*}
D=\overline{B}_{u}\left( R\right) \times\overline{B}_{s}\left( R\right)
\subset\mathbb{R}^{u}\times\mathbb{R}^{s}
\end{equation*}

The results of this section are more general than the previously considered
ode in $\mathbb{R}^{3}$, and here $u,s$ can be any natural numbers.
We use a notation $x\in\mathbb{R}^{u}$ to stand for the unstable coordinate
and $y\in \mathbb{R}^{s}$ for the stable coordinate. For us it will be
enough if these coordinates are `roughly' aligned with the eigenspaces of a
fixed point. (We do not need to work with precisely linearised local
coordinates.) We write $f(x,y)=(f_x(x,y),f_y(x,y))$, where $f_x$ is the
projection onto $\mathbb{R}^u $ and $f_y$ is the projection onto $\mathbb{R}%
^s$.

Let $L>0$ be a fixed number. We define%
\begin{align*}
\mu_{1} & =\sup_{z\in D}\left\{ l\left( \frac{\partial f_{y}}{\partial y}%
(z)\right) +\frac{1}{L}\left\Vert \frac{\partial f_{y}}{\partial x}%
(z)\right\Vert \right\} , \\
\mu_{2} & =\sup_{z\in D}\left\{ l\left( \frac{\partial f_{y}}{\partial y}%
(z)\right) +\frac{1}{L}\left\Vert \frac{\partial f_{x}}{\partial y}%
(z)\right\Vert \right\} , \\
\xi & =m_{l}\left( \frac{\partial f_{x}}{\partial x}(D)\right) -\frac{1}{L}%
\sup_{z\in D}\left\Vert \frac{\partial f_{x}}{\partial y}(z)\right\Vert .
\end{align*}

\begin{definition}
We say that the vector field $f$ \emph{satisfies rate conditions in $D$} if%
\begin{equation}
\mu_{1}<0<\xi,  \label{eq:rate-cond1}
\end{equation}%
\begin{equation}
\mu_{2}<\xi.  \label{eq:rate-cond2}
\end{equation}
\end{definition}

\begin{definition}
We say that $D=\overline{B}_{u}\left( R\right) \times\overline{B}_{s}\left(
R\right) $ is \emph{an isolating block for (\ref{eq:ode-no-param})} if

\begin{enumerate}
\item For any $q\in\partial\overline{B}_{u}\left( R\right) \times \overline{B%
}_{s}\left( R\right) $, 
\begin{equation*}
\left( \pi_{x}f(q)|\pi_{x}q\right) >0.
\end{equation*}

\item For any $q\in\overline{B}_{u}\left( R\right) \times\partial \overline{B%
}_{s}\left( R\right) $,%
\begin{equation*}
\left( \pi_{y}f(q)|\pi_{y}q\right) <0.
\end{equation*}
\end{enumerate}
\end{definition}

\begin{definition}
We define the unstable set in $D$ as%
\begin{equation*}
W^{u}=\{z:\text{ }\Phi_{t}(z)\in D\text{ for all }t<0\}.
\end{equation*}
\end{definition}

\begin{theorem}
\label{th:Wu-rates} Assume that $f$ is $C^{1}$ and satisfies rate
conditions. Assume also that $D=\overline{B}_{u}\left( R\right) \times%
\overline{B}_{s}\left( R\right) $ is an isolating block for $f$.
Then the set $W^{u}$ is a manifold, which is a graph over $\overline{B}%
_{u}\left( R\right) $. To be more precise, there exists a function 
\begin{equation*}
w^{u}:\overline{B}_{u}(R)\rightarrow\overline{B}_{s}(R),
\end{equation*}
such that 
\begin{equation*}
W^{u}=\left\{ \left( x,w^{u}(x)\right) :x\in\overline{B}_{u}(R)\right\} .
\end{equation*}
Moreover, $w^{u}$ is Lipschitz with constant $L$ and for $C=2R\left(
1+1/L\right) $, for any $p_{1},p_{2}\in W^{u},$%
\begin{equation}
\left\Vert \Phi_{-t}\left( p_{1}\right) -\Phi_{-t}\left( p_{2}\right)
\right\Vert \leq Ce^{-t\xi}\qquad\text{for all }t>0.
\label{eq:contraction-cond}
\end{equation}
\end{theorem}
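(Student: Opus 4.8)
The plan is to build the manifold $W^u$ as a graph by a graph-transform / invariant-cone argument, using the rate conditions to supply the needed contraction and Lipschitz estimates, and then to extract the exponential contraction bound (\ref{eq:contraction-cond}) directly from the quantity $\xi$. First I would fix the cone field $Q = \{(x,y) : \|y\| \le L\|x\|\}$ around the unstable direction and set up the corresponding "horizontal disks" — graphs $y=\varphi(x)$ over $\overline B_u(R)$ with $\mathrm{Lip}(\varphi)\le L$ — as the objects on which the flow acts. Using Lemma~\ref{lem:phi(p)-phi(q)} (applied with backward time, exactly as the remark anticipates), for two points $p_1,p_2$ on such a disk I would write $\Phi_{-t}(p_1)-\Phi_{-t}(p_2) = (p_1-p_2) - tC(p_1-p_2) + g$, split into $x$- and $y$-components, and estimate $\|I - tC\|$-type terms blockwise via the logarithmic-norm Lemmas~\ref{lem:norn-l-approx} and~\ref{lem:m-to-ml}: the $x$-block expands at rate at least $1 + t\,m_l(\partial f_x/\partial x) - \tfrac{t}{L}\sup\|\partial f_x/\partial y\| + O(t^2) = 1 + t\xi + O(t^2)$, while the coupling terms involving $\partial f_y/\partial x$ and $\partial f_y/\partial y$ are controlled by $\mu_1$. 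The rate condition $\mu_1 < 0 < \xi$ then shows that the cone $Q$ is forward-invariant under $\Phi_{-t}$ on $D$ for small $t$, and hence (by composition) for all $t>0$ for which the orbit stays in $D$.

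Next I would use the isolating block property to get the topological "correct alignment": by condition~(1), on $\partial \overline B_u(R)\times\overline B_s(R)$ the vector field points strictly outward in $x$, so $\Phi_{-t}$ maps the $x$-boundary strictly inward; by condition~(2) the $y$-fibers are entered (in forward time), i.e. $W^u$ cannot touch $\overline B_u(R)\times\partial\overline B_s(R)$. Combining cone-invariance with this, each backward-time image of a full horizontal disk is again (contained in the graph of) a horizontal disk stretching across $D$ in the $x$-direction; the images are nested, their $x$-widths shrink, and by the uniform Lipschitz bound $L$ they converge (in the sup norm, using a standard Arzelà–Ascoli / contraction argument on the space of $L$-Lipschitz graphs) to a single Lipschitz graph $w^u:\overline B_u(R)\to\overline B_s(R)$ whose graph is exactly $W^u$ as defined. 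That $w^u$ is $C^1$ would follow from the $C^1$ version of the cone argument — propagating the tangent cone as well — though for the statement as written only the Lipschitz bound $L$ is claimed, so I would keep the differentiability part brief and refer to \cite{CZ-nhims}.

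Finally, for the contraction estimate (\ref{eq:contraction-cond}): given $p_1,p_2\in W^u$, both lie on the graph, so $\|p_1-p_2\|\le (1+1/L)\|\pi_x p_1 - \pi_x p_2\|$ and $\|\pi_x p_1-\pi_x p_2\|\le 2R$. The cone is invariant, so all $\Phi_{-t}(p_i)$ stay on graphs, and the $x$-expansion estimate above — integrated via the standard $(1+t\xi/n+O(1/n^2))^n \to e^{t\xi}$ passage to the limit over a partition of $[0,t]$ — gives $\|\pi_x\Phi_{-t}(p_1)-\pi_x\Phi_{-t}(p_2)\| \ge e^{t\xi}\|\pi_x p_1-\pi_x p_2\|$, i.e. in backward time the $x$-distance contracts at rate $e^{-t\xi}$ as we run time forward from $\Phi_{-t}$ back to the base points; re-reading this yields $\|\Phi_{-t}(p_1)-\Phi_{-t}(p_2)\| \le (1+1/L)\|\pi_x\Phi_{-t}(p_1)-\pi_x\Phi_{-t}(p_2)\| \le (1+1/L)e^{-t\xi}\cdot 2R = Ce^{-t\xi}$ with $C=2R(1+1/L)$, where the crucial point is that $\pi_x\Phi_{-t}(p_i)\in\overline B_u(R)$ so its norm is again bounded by $R$. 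The main obstacle I anticipate is making the infinitesimal rate estimates genuinely uniform and closing the "small $t$ plus composition" argument rigorously — i.e. controlling the accumulated $O(t^2)$ error terms from Lemmas~\ref{lem:phi(p)-phi(q)}, \ref{lem:norn-l-approx}, \ref{lem:m-to-ml} so that they vanish in the limit and do not degrade the exponent $\xi$; the role of condition~(\ref{eq:rate-cond2}), $\mu_2<\xi$, is precisely to guarantee that the $y$-direction stays dominated by the $x$-expansion so the graph property is preserved along the whole orbit, and I would need to check that inequality enters the cone-invariance bound with the right sign.
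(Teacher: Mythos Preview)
Your proposal is essentially correct and is in fact considerably more detailed than the paper's own proof: the paper does not argue anything here but simply invokes Theorem~30 of \cite{CZ-Meln}, observing that the present setting is the special case with no centre coordinate, so that the rate conditions (\ref{eq:rate-cond1})--(\ref{eq:rate-cond2}) suffice. What you sketch --- backward-time cone invariance via Lemma~\ref{lem:phi(p)-phi(q)} and the logarithmic-norm Lemmas~\ref{lem:norn-l-approx}, \ref{lem:m-to-ml}, a graph-transform limit over $L$-Lipschitz horizontal disks using the isolating block for the topological alignment, and then the $e^{-t\xi}$ contraction from the $x$-expansion estimate --- is precisely the machinery of \cite{CZ-Meln} on which that citation rests, so the approaches coincide in substance.

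One small point to tidy: in your final paragraph you write $\|p_1-p_2\|\le (1+1/L)\|\pi_x p_1-\pi_x p_2\|$, but the Lipschitz bound on $w^u$ is $L$, which gives the factor $(1+L)$ (or $\sqrt{1+L^2}$), not $(1+1/L)$; you are back-fitting to the stated constant $C=2R(1+1/L)$ rather than deriving it. This does not affect the validity of the exponential bound (any fixed constant will do), but you should either derive the constant honestly or, as the paper does, defer to \cite{CZ-Meln} for the precise value.
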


\begin{proof}
The result follows directly from Theorem 30 from \cite{CZ-Meln}. Theorem 30
in \cite{CZ-Meln} is written in the context where apart from $x,y$ we have
an additional `center' coordinate, which is not present here. This is why
the number of constants and rate conditions (\ref{eq:rate-cond1}--\ref%
{eq:rate-cond2}) for Theorem \ref{th:Wu-rates} is smaller than the number of
constants and associated inequalities needed in \cite{CZ-Meln}. The (\ref%
{eq:rate-cond1}--\ref{eq:rate-cond2}) imply all the needed assumptions of
Theorem 30 from \cite{CZ-Meln} in the absence of the center coordinate.
\end{proof}

In above theorem we ignore (fix) the parameter. The result can be extended
to include the parameter as follows.

\begin{theorem}
\label{th:Wu-rates-theta}Consider a parameter dependent ODE 
\begin{equation*}
p^{\prime}=f(p,\theta),
\end{equation*}
for $\theta\in\Theta$. Assume that the system has a smooth family of
hypebolic fixed points $p_{\theta}^{\ast}$. Assume that for each (fixed) $%
\theta$, the vector field satisfies assumptions of Theorem \ref{th:Wu-rates}%
. Then the family of unstable manifolds $W_{\theta}^{u}$ (as defined in (\ref%
{eq:Wu-def})) of $p_{\theta}^{\ast}$ is given by a graph of a function%
\begin{equation*}
w^{u}:\overline{B}_{u}\left( R\right) \times\Theta\rightarrow\overline {B}%
_{s}\left( R\right) ,
\end{equation*}
(meaning that $W_{\theta}^{u}=\left\{ \left( x,w^{u}(x,\theta)\right) :x\in%
\overline{B}_{u}\left( R\right) \right\} $,) which is as smooth as $f$.
\end{theorem}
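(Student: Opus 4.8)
The plan is to obtain the parametrised manifold $w^u(\cdot,\cdot)$ by applying Theorem~\ref{th:Wu-rates} not to the flow of $p'=f(p,\theta)$ itself, but to an enlarged autonomous system in which the parameter is promoted to a phase variable. Concretely, introduce $\tilde p = (p,\theta) \in \mathbb{R}^u \times \mathbb{R}^s \times \Theta$ and consider the extended vector field $\tilde f(\tilde p) = (f(p,\theta),0)$, whose flow is $\tilde\Phi_t(p,\theta) = (\Phi_t(p,\theta),\theta)$. The set of fixed points $\{(p_\theta^\ast,\theta):\theta\in\Theta\}$ is then a normally hyperbolic invariant manifold for $\tilde f$, with an $(u+\dim\Theta)$-dimensional unstable direction (the original unstable direction, plus the neutral parameter direction) and the same $s$-dimensional stable direction. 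On the enlarged set $\tilde D = \overline B_u(R)\times\overline B_s(R)\times\Theta$ one re-groups coordinates so that $(x,\theta)$ plays the role of the ``unstable'' variable and $y$ the ``stable'' variable, and one checks that the rate conditions for $\tilde f$ in $\tilde D$ are exactly the rate conditions already assumed for each fixed $\theta$: since $\partial\tilde f/\partial\theta$ contributes only to blocks that are transverse to the stable direction, and since the $\theta$-component of $\tilde f$ vanishes identically, the constants $\mu_1,\mu_2,\xi$ for the extended system coincide with (suprema over $\theta$ of) those for the fibrewise systems, up to harmless enlargements of the off-diagonal norm terms. Here one must choose the Lipschitz constant $L$ and the block radii so that the mixed bound involving $\|\partial f/\partial\theta\|$ is absorbed; this is the one place where the hypothesis needs a little care, and it is the main obstacle: one has to verify that promoting $\theta$ to a phase variable does not destroy the inequalities $\mu_1<0<\xi$ and $\mu_2<\xi$, which is true precisely because $\theta$ is neutral (its equation is $\dot\theta=0$) so it behaves like an extra unstable/center direction that only interacts with the genuinely contracting $y$-block through the already-controlled term $\partial f_y/\partial\theta$.

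Granting that the extended system satisfies the hypotheses of Theorem~\ref{th:Wu-rates}, that theorem yields a Lipschitz function $\tilde w:\overline B_u(R)\times\Theta\to\overline B_s(R)$ whose graph is the unstable set $\widetilde{W}^u=\{\tilde z:\tilde\Phi_t(\tilde z)\in\tilde D\text{ for all }t<0\}$. Because $\tilde\Phi_t$ preserves the $\theta$-fibres, $\widetilde{W}^u$ intersected with the fibre $\{\theta=\theta_0\}$ is exactly $W^u_{\theta_0}$ as defined in~(\ref{eq:Wu-def}); hence $w^u(x,\theta):=\tilde w(x,\theta)$ is the desired function and $W_\theta^u=\{(x,w^u(x,\theta)):x\in\overline B_u(R)\}$. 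This identification uses the isolating-block property fibrewise together with the fact that $\tilde D$ is an isolating block for $\tilde f$ (the $\theta$-boundary of $\tilde D$ is never crossed since $\dot\theta=0$, and the $x$- and $y$-boundary conditions are inherited verbatim from the fibrewise ones).

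For the smoothness claim, I would invoke the standard $C^k$ (and $C^\infty$, and analytic) regularity of unstable manifolds of normally hyperbolic invariant manifolds: once the $C^1$ graph is in hand and the spectral gap in the rate conditions is strict, the usual fibre-contraction / graph-transform bootstrap (as in the Fenichel theory and its quantitative versions in \cite{CZ-nhims}) upgrades $w^u$ to the regularity of $\tilde f$, which equals the regularity of $f$, jointly in $(x,\theta)$. In the paper's framework this is the content of the $C^k$ version of the results of \cite{CZ-nhims}; since the extended vector field $\tilde f$ is as smooth as $f$ (adjoining a constant zero component changes nothing), the conclusion follows. I would remark that for the application only $C^1$ dependence is actually used, so if a self-contained argument is preferred one can restrict to $k=1$ and cite the $C^1$ part of Theorem~30 of \cite{CZ-Meln} directly for the extended system.
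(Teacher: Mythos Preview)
Your approach has a genuine gap: the extended system $\tilde f(p,\theta)=(f(p,\theta),0)$ does \emph{not} satisfy the hypotheses of Theorem~\ref{th:Wu-rates} when $(x,\theta)$ is grouped as the ``unstable'' variable. The condition $0<\xi$ requires a positive lower bound on $m_l$ of the block $\partial\tilde f_{x,\theta}/\partial(x,\theta)$, which here equals
\[
\begin{pmatrix}\dfrac{\partial f_x}{\partial x} & \dfrac{\partial f_x}{\partial\theta}\\[2mm] 0 & 0\end{pmatrix}.
\]
Evaluating the quadratic form of its symmetric part on the unit vector in the $\theta$-direction gives $0$, so by Lemma~\ref{lem:norms-spectrum} one has $m_l\le 0$ at every point, hence $\tilde\xi\le 0$. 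The neutrality of $\theta$ is not what saves the inequality; it is precisely what breaks it. Falling back on the version of Theorem~30 in \cite{CZ-Meln} with $\theta$ treated as a genuine center coordinate is more reasonable, but then there are additional rate conditions coupling the center block to $x$ and $y$, and these involve $\partial f_x/\partial\theta$ and $\partial f_y/\partial\theta$. The hypothesis of the present theorem is only that the fibrewise rate conditions of Theorem~\ref{th:Wu-rates} hold for each fixed $\theta$; this gives no control whatsoever over $\theta$-derivatives of $f$, so those extra inequalities cannot be deduced from the stated assumptions. (There is also a minor shape mismatch: $\tilde D=\overline B_u(R)\times\overline B_s(R)\times\Theta$ is not of the form $\overline B_{u'}(R')\times\overline B_s(R)$, and the strict exit condition on the $(x,\theta)$-boundary fails wherever the outward normal lies in the $\theta$-direction, since $\pi_\theta\tilde f\equiv 0$.)

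The paper sidesteps all of this. It applies Theorem~\ref{th:Wu-rates} only fibrewise, obtaining for each $\theta$ the Lipschitz graph $w^u(\cdot,\theta)$ together with the backward contraction estimate~(\ref{eq:contraction-cond}). Joint smoothness in $(x,\theta)$ comes from a different source: classical unstable-manifold theory for hyperbolic fixed points (applied to the family $p_\theta^\ast$ in the extended phase space) yields a local unstable manifold near $\{(p_\theta^\ast,\theta)\}$ that is as smooth as $f$; estimate~(\ref{eq:contraction-cond}) then guarantees that every point of $W_\theta^u$ is carried by the backward flow into that small neighbourhood, so the global graph over $\overline B_u(R)\times\Theta$ is the image of the smooth local one under the smooth flow $\Phi_t$. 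No rate conditions in the $\theta$-direction are ever invoked.
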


\begin{proof}
The existence of $w^{u}$ follows from Theorem \ref{th:Wu-rates}. We need to
justify its smoothness.

From the classical theory (see for instance \cite{Hartman},\cite{Hirsch},\cite{Nitecki}%
), we know that in a small neighbourhood $U$ of $\{(p_{\theta
}^{\ast },\theta )|\theta \in \Theta \}$ (considered in the state space,
extended to include the parameter), the family of local unstable manifolds
exists, and is as smooth as $f$. Condition (\ref{eq:contraction-cond})
ensures that the local manifold is propagated along the flow in the extended
space to span the set $D\times \Theta $. Since $\Phi _{t}$ is as smooth as $f
$, this establishes the smoothness of $w^{u}$.
\end{proof}

\begin{remark}
In this section we have focused on the unstable manifold. This method can
also be applied to obtain bounds on a stable manifold. To do so one can
simply change the sign of the vector field.
\end{remark}

\section{Dependence of the unstable manifold on parameters\label%
{sec:Wu-param}}

In this section we consider the ODE of the form 
\begin{equation}
p^{\prime}=f(p,\theta)  \label{eq:ode-theta}
\end{equation}
depending on the parameter $\theta\in\Theta$, where $p\in\mathbb{R}^{u}\times%
\mathbb{R}^{s}$ and $f:\mathbb{R}^{u}\times\mathbb{R}^{s}\times
\Theta\rightarrow\mathbb{R}^{u}\times\mathbb{R}^{s}$ is $C^1$
function, with 
\begin{equation*}
f(x,y,\theta)=(f_{x}(x,y,\theta),f_{y}(x,y,\theta)).
\end{equation*}
Our aim know is to examine the nature of the dependency of function $w^{u}$,
which parametrizes the unstable manifold in the stament of Theorem \ref%
{th:Wu-rates}, on parameter $\theta$.

Let our coordinates be $(x,y,\theta)\in\mathbb{R}^{u}\times\mathbb{R}%
^{s}\times\mathbb{R}$, and let us consider the following sets: 
\begin{align*}
J_{s}\left( q,M\right) & =\left\{ \left( x,y,\theta\right) :\left\Vert
\pi_{x,\theta}q-\left( x,\theta\right) \right\Vert \leq M\left\Vert \pi
_{y}q-y\right\Vert \right\} , \\
J_{cu}\left( q,M\right) & =\left\{ \left( x,y,\theta\right) :\left\Vert
\pi_{y}q-y\right\Vert \leq M\left\Vert \pi_{x,\theta}q-\left( x,\theta
\right) \right\Vert \right\} ,
\end{align*}
where $q\in\mathbb{R}^{u}\times\mathbb{R}^{s}\times\mathbb{R}$ and $M>0$.
These sets represent cones depicted in Figure \ref{fig:cones}. Note that we
have%
\begin{equation}
\left( J_{cu}\left( q,1/M\right) \right) ^{c}=\mathrm{int}J_{s}\left(
q,M\right) .  \label{eq:Jcu-complement}
\end{equation}

\begin{figure}[ptb]
\begin{center}
\includegraphics[height=4.5cm]{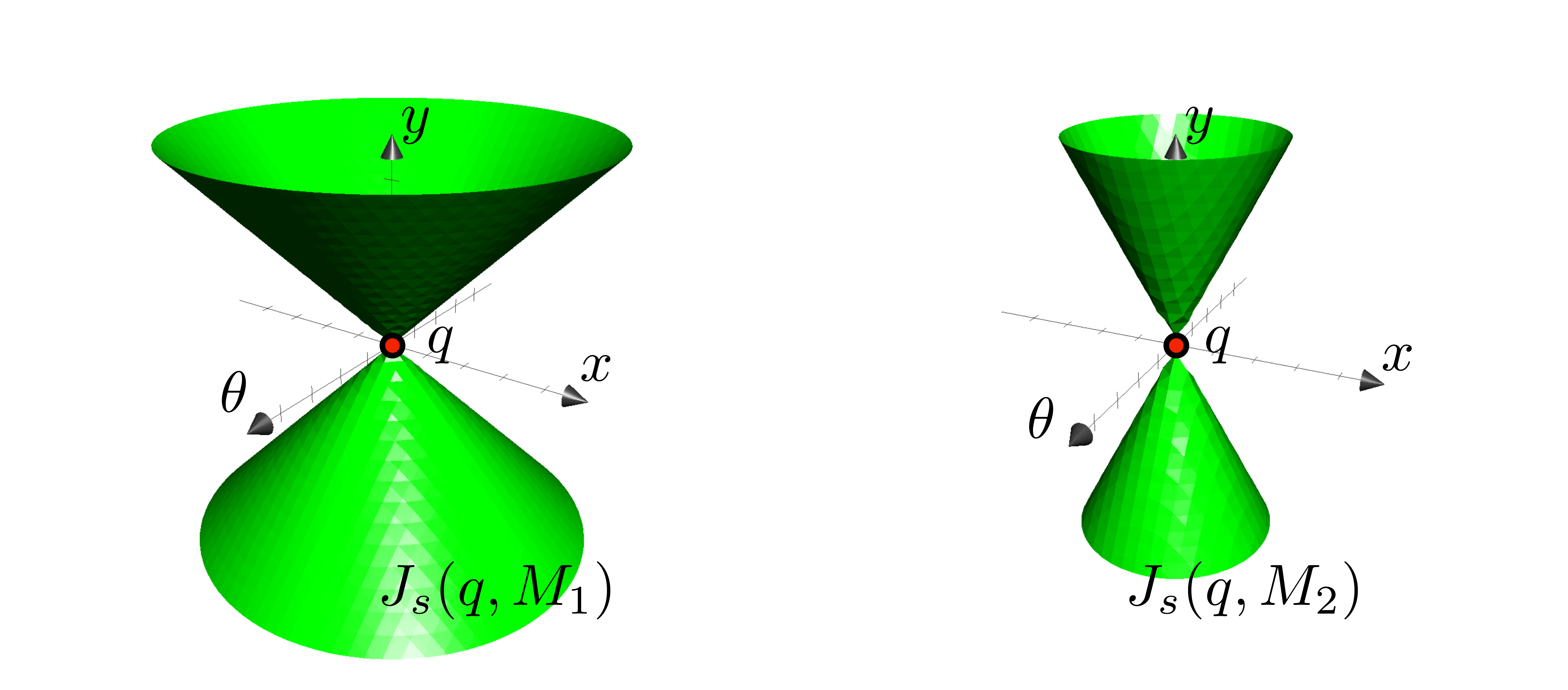}
\end{center}
\caption{The cones $J_s(q,M_1)$ and $J_s(q,M_2)$ for $M_1=1$ and $M_2=\frac{1%
}{2}$.}
\label{fig:cones}
\end{figure}

Let us consider an ODE given by (\ref{eq:ode-theta}) in the state space
extended by parameter, that is 
\begin{equation}
\left( x^{\prime},y^{\prime},\theta^{\prime}\right) =\left( f_{x}\left(
x,y,\theta\right) ,f_{y}\left( x,y,\theta\right) ,f_{\theta}(x,y,\theta)
\right) ,  \label{eq:ode-extended}
\end{equation}
where $f_{\theta}(x,y,\theta)=0$. Let $\Phi_{t}(x,y,\theta)$ be the flow
induced by (\ref{eq:ode-extended}).

Let $D=\overline{B}_{u}\left( R\right) \times\overline{B}_{s}\left( R\right)
\subset\mathbb{R}^{u}\times\mathbb{R}^{s}$ and let us define 
\begin{equation*}
\mathcal{D}=D\times\Theta,
\end{equation*}
and the following constants:%
\begin{align}
\mu\left( M\right) & =l\left( \frac{\partial f_{y}}{\partial y}\left( 
\mathcal{D}\right) \right) +M\left\Vert \frac{\partial f_{y}}{\partial
\left( x,\theta\right) }\left( \mathcal{D}\right) \right\Vert ,
\label{eq:mu-def} \\
\xi\left( M\right) & =m_{l}\left( \frac{\partial f_{x,\theta}}{%
\partial\left( x,\theta\right) }\left( \mathcal{D}\right) \right) -\frac{1}{M%
}\left\Vert \frac{\partial f_{x,\theta}}{\partial y}\left( \mathcal{D}%
\right) \right\Vert .  \label{eq:xi-def}
\end{align}
Our objective will be to prove the following theorem:

\begin{theorem}
\label{th:param-cone-prop}Consider that assumptions of Theorem \ref%
{th:Wu-rates-theta} hold and that $M>0$ is such that 
\begin{equation*}
\mu\left( M\right) <0\qquad\text{and\qquad}\xi\left( M\right) >\mu\left(
M\right) .
\end{equation*}
Then%
\begin{equation*}
\left\Vert \frac{\partial w^{u}}{\partial\theta}\right\Vert \leq1/M.
\end{equation*}
\end{theorem}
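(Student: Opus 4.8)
The plan is to work in the extended phase space~\eqref{eq:ode-extended}, where $\theta$ is promoted to a dependent variable with $\theta' = 0$, and to show that the cone field $J_s(q,M)$ is positively invariant in backward time along the flow $\Phi_t$ restricted to the extended unstable set $W^u \times \Theta$. Concretely, I would fix two points $p_1 = (x_1,y_1,\theta_1)$ and $p_2 = (x_2,y_2,\theta_2)$ that both lie on the graph $w^u$ (so $y_i = w^u(x_i,\theta_i)$), and show that if $p_2 \in J_s(p_1,M)$ then this relation is preserved, with a suitable strict improvement, as we push both points backward in time; since backward orbits of points on $W^u$ converge to the fixed point (by the isolating-block property and the definition of $W^u$), iterating the cone estimate forces $p_2 \in J_s(p_1,M)$ for \emph{all} admissible $p_1,p_2$. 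Taking $p_1,p_2$ with the same $x$-coordinate but different $\theta$, the membership $p_2 \in J_s(p_1,M)$ reads $\|(x_1,\theta_1)-(x_2,\theta_2)\| \le M\|y_1 - y_2\|$, which with $x_1 = x_2$ gives $|\theta_1 - \theta_2| \le M\|w^u(x,\theta_1) - w^u(x,\theta_2)\|$, i.e.\ the difference quotient of $w^u$ in $\theta$ is bounded by $1/M$; letting $\theta_2 \to \theta_1$ yields $\|\partial w^u/\partial\theta\| \le 1/M$.

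The core estimate is the one-step cone propagation, and here I would invoke Lemma~\ref{lem:phi(p)-phi(q)} together with the logarithmic-norm Lemmas~\ref{lem:norn-l-approx} and~\ref{lem:m-to-ml}. Writing $a = \pi_{x,\theta}(p_1) - \pi_{x,\theta}(p_2)$ and $b = \pi_y(p_1) - \pi_y(p_2)$, the lemma gives, for small $t>0$,
\begin{equation*}
\pi_{x,\theta}\bigl(\Phi_{-t}(p_1)-\Phi_{-t}(p_2)\bigr) = a - t\Bigl(\tfrac{\partial f_{x,\theta}}{\partial(x,\theta)}\,a + \tfrac{\partial f_{x,\theta}}{\partial y}\,b\Bigr) + O(t^2\|p_1-p_2\|),
\end{equation*}
and similarly for the $y$-component with $\partial f_y/\partial(x,\theta)$ and $\partial f_y/\partial y$, where the derivative blocks are evaluated (via $[Df(\mathcal D)]$) at points of $\mathcal D$. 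Estimating $\|\pi_y(\Phi_{-t}(p_1)-\Phi_{-t}(p_2))\|$ from above using $m_l$/$l$ bounds and $\|\pi_{x,\theta}(\Phi_{-t}(p_1)-\Phi_{-t}(p_2))\|$ from below, and using the cone hypothesis $\|b\| \le (1/M)\|a\|$ — wait, one must be careful about which direction the cone is stated — using $\|a\| \le M\|b\|$ inside $J_s$, one obtains that the ratio $\|b\|/\|a\|$ contracts at infinitesimal rate governed by $\mu(M) - \xi(M) < 0$, while the hypothesis $\mu(M) < 0$ guarantees the denominator does not collapse (the $(x,\theta)$-direction genuinely expands backward). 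Composing these infinitesimal estimates over $[0,t]$ and sending $t\to\infty$ along the convergent backward orbits gives the global cone inclusion. This is structurally the same argument that underlies Theorem~\ref{th:Wu-rates} / Theorem 30 of~\cite{CZ-Meln}, now applied to the $(u+1)$-dimensional ``unstable'' block $(x,\theta)$ versus the $s$-dimensional stable block $y$, with the degenerate direction $\theta$ contributing a zero eigenvalue that $\xi(M)>\mu(M)$ and $\mu(M)<0$ are designed to tolerate.

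The main obstacle I anticipate is bookkeeping around the degenerate $\theta$-direction: because $f_\theta \equiv 0$, the extended vector field is not hyperbolic, so one cannot quote Theorem~\ref{th:Wu-rates} verbatim — the constant $\xi(M)$ involves $m_l$ of a block that has a genuine zero mode, and one must check that the rate inequalities $\mu(M)<0$, $\mu(M)<\xi(M)$ are exactly what is needed to run the cone-contraction argument despite this. The second delicate point is justifying that it suffices to test the cone condition only on \emph{pairs of points lying on $W^u$}: this uses that $W^u\times\Theta$ is exactly the backward-invariant set, that its backward orbits converge to the fixed-point family, and that $w^u$ is $C^1$ (Theorem~\ref{th:Wu-rates-theta}) so the difference quotient limit defining $\partial w^u/\partial\theta$ is legitimate. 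Once these two structural points are in place, the remaining work — expanding via Lemma~\ref{lem:phi(p)-phi(q)}, bounding the norms, and passing to the limit — is routine and parallels~\cite{CZ-Meln}.
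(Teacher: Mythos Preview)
Your overall architecture---extend phase space by $\theta$ with $\theta'=0$, propagate a cone via Lemma~\ref{lem:phi(p)-phi(q)} and the logarithmic-norm lemmas, then read off a Lipschitz bound---is exactly what the paper does. But the cone logic in your plan is inverted, and as written the argument would prove the wrong inequality.

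You claim that backward invariance of $J_s(\cdot,M)$ together with backward convergence on $W^u$ ``forces $p_2\in J_s(p_1,M)$ for all admissible $p_1,p_2$'', and then that $p_2\in J_s(p_1,M)$ with $x_1=x_2$ gives $|\theta_1-\theta_2|\le M\|w^u(x,\theta_1)-w^u(x,\theta_2)\|$, ``i.e.\ the difference quotient \dots\ is bounded by $1/M$''. That last step is a \emph{lower} bound $\|w^u(x,\theta_1)-w^u(x,\theta_2)\|\ge (1/M)|\theta_1-\theta_2|$, not the desired upper bound. Correspondingly, your norm estimates are swapped: in backward time one bounds $\|\pi_y(\Phi_{-t}(p_1)-\Phi_{-t}(p_2))\|$ from \emph{below} (it grows, since $-\mu(M)>0$) and $\|\pi_{x,\theta}(\Phi_{-t}(p_1)-\Phi_{-t}(p_2))\|$ from \emph{above}; combining these shows that $J_s$ is backward invariant \emph{and} that inside $J_s$ the $y$-separation blows up like $(1+ct)^k$.

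The correct conclusion is then by contradiction (this is the paper's Lemma~\ref{lem:wu-in-cone}): if two points on the graph of $w^u$ satisfied $q_1\in\mathrm{int}\,J_s(q_2,M)$, their backward orbits would stay in $\mathcal D$ (since they lie on $W^u_{\theta_i}$) yet have unbounded $y$-separation, which is impossible. Hence every such pair lies in $J_{cu}(\cdot,1/M)$ by~\eqref{eq:Jcu-complement}, which for $x_1=x_2$ reads $\|w^u(x,\theta_1)-w^u(x,\theta_2)\|\le (1/M)|\theta_1-\theta_2|$, and the derivative bound follows. Once you flip the cone direction and the two norm inequalities, your plan coincides with the paper's proof.
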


The proof of the theorem will be given at the end of the section. To show
the result we shall need two technical lemmas.

\begin{lemma}
\label{lem:param-cone-prop}Assume that $M>0$ is such that 
\begin{equation*}
\mu\left( M\right) <0\qquad\text{and\qquad}\xi\left( M\right) >\mu\left(
M\right) .
\end{equation*}
Then there exists a $c>0$ and $t_{M}>0$ such that for any $q\in\mathcal{D}$
and $p\in J_{s}\left( q,M\right) \cap\mathcal{D},$ $p\neq q$, as long as $%
\{\Phi_{-t}(p),\Phi_{-t}(q):t\in\lbrack0,t_{M}]\}\subset\mathcal{D}$, the
following inequality holds 
\begin{equation}
\left\Vert \pi_{y}\left( \Phi_{-t}\left( p\right) -\Phi_{-t}\left( q\right)
\right) \right\Vert >\left( 1+ct\right) \left\Vert \pi_{y}\left( p-q\right)
\right\Vert ,  \label{eq:Js-contraction}
\end{equation}
for any $t\in(0,t_{M})$. Moreover 
\begin{equation}
\Phi_{-t}\left( p\right) \in J_{s}\left( \Phi_{-t}\left( q\right) ,M\right) .
\label{eq:Js-alignment}
\end{equation}
\end{lemma}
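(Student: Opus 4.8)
The plan is to work in the parameter-extended phase space and track how the components of $\Phi_{-t}(p)-\Phi_{-t}(q)$ evolve, using the linear approximation from Lemma~\ref{lem:phi(p)-phi(q)} together with the logarithmic-norm estimates of Lemmas~\ref{lem:norn-l-approx} and \ref{lem:m-to-ml}. Write $\delta=p-q$, $\delta_y=\pi_y\delta$, $\delta_{x,\theta}=\pi_{x,\theta}\delta$, and similarly $\delta(t)=\Phi_{-t}(p)-\Phi_{-t}(q)$. Applying Lemma~\ref{lem:phi(p)-phi(q)} to the extended system on the convex compact set $\mathcal D$ (after shrinking to a convex block, or noting $\mathcal D$ is already a product of balls hence convex), we get $\delta(t)=\delta-tC\delta+g(t,p,q)$ with $C\in[Df(\mathcal D)]$ and $\|g\|\le Mt^2\|\delta\|$. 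Splitting $C$ into its block structure with respect to the $(x,\theta)$ and $y$ coordinates — note the $\theta'=0$ row makes the $\theta$-component of $\delta$ constant, which is harmless — I would estimate the $y$-block $\pi_y\delta(t)$ from below and the $(x,\theta)$-block $\pi_{x,\theta}\delta(t)$ from above.

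For the lower bound on $\|\pi_y\delta(t)\|$: the $y$-row of $I-tC$ acts on $\delta_y$ by a matrix of the form $I-t\,\partial f_y/\partial y$, contributing (via Lemma~\ref{lem:m-to-ml} and Corollary~\ref{cor:m-to-l}) a factor $\ge 1-t\,l(\partial f_y/\partial y)+O(t^2)$ in $m(\cdot)$, i.e. backward in time this is \emph{expansion} with rate $-l(\partial f_y/\partial y)(\mathcal D)$; and the off-diagonal coupling to $\delta_{x,\theta}$ contributes an error of size at most $t\,\|\partial f_y/\partial(x,\theta)(\mathcal D)\|\,\|\delta_{x,\theta}\|$. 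Since $p\in J_s(q,M)$ gives $\|\delta_{x,\theta}\|\le M\|\delta_y\|$, this coupling error is bounded by $tM\|\partial f_y/\partial(x,\theta)(\mathcal D)\|\,\|\delta_y\|$. Combining, $\|\pi_y\delta(t)\|\ge\big(1-t\,\mu(M)+O(t^2)\big)\|\delta_y\|$ where $\mu(M)$ is exactly the constant in \eqref{eq:mu-def}; since $\mu(M)<0$, for $t$ below some $t_M$ and a suitable $c>0$ (any $c<-\mu(M)$ absorbs the $O(t^2)$) we obtain \eqref{eq:Js-contraction}. In particular $\delta_y(t)\ne 0$ along the way, so $p\ne q$ is preserved and the cone condition below makes sense.

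For the cone alignment \eqref{eq:Js-alignment}, I would similarly bound $\|\pi_{x,\theta}\delta(t)\|\le\big(1-t\,\xi(M)+O(t^2)\big)\|\delta_{x,\theta}\|+t\,\tfrac{1}{1}\|\partial f_{x,\theta}/\partial y(\mathcal D)\|\,\|\delta_y\|\cdot(\text{something})$ — more precisely the diagonal $(x,\theta)$-block gives contraction/expansion governed by $m_l(\partial f_{x,\theta}/\partial(x,\theta))$, and the cross term by $\|\partial f_{x,\theta}/\partial y(\mathcal D)\|$; arranging the estimate so that the ratio $\|\pi_{x,\theta}\delta(t)\|/\|\pi_y\delta(t)\|$ is bounded by $M$ is exactly what the hypothesis $\xi(M)>\mu(M)$ buys us: the denominator grows (at worst) like $1-t\mu(M)$ while the numerator grows at worst like $1-t\xi(M)<1-t\mu(M)$ plus a controlled cross term, so for $t_M$ small the cone $J_s(\cdot,M)$ is forward-(backward-in-time-)invariant. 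One then iterates over successive intervals of length $\le t_M$ to reach arbitrary $t\in(0,t_M)$; actually since the statement only claims $t\in(0,t_M)$ a single application suffices, but the multiplicative structure $(1+ct)$ should be upgraded to the sharp exponential by the usual subdivision argument if needed later.

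The main obstacle I expect is bookkeeping the off-diagonal couplings carefully enough that the constants line up with \eqref{eq:mu-def}--\eqref{eq:xi-def} rather than with something weaker: one must use that $[Df(\mathcal D)]$ is an interval enclosure so each block of $C$ lies in the corresponding interval-matrix block, apply $l$, $m_l$ to the \emph{worst case} over that block (Lemmas~\ref{lem:norn-l-approx}, \ref{lem:m-to-ml} are stated for a compact set $W$ of matrices, which covers this), and keep the $O(t^2)$ remainders — both from those lemmas and from the $g$ term of Lemma~\ref{lem:phi(p)-phi(q)} — uniformly bounded by a single constant so that a common $t_M$ and $c$ work for all $q\in\mathcal D$. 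The $\theta$-direction being flow-invariant ($f_\theta\equiv 0$) is a minor simplification: it only means the corresponding block of $C$ has a zero row, which does not affect any of the inequalities.
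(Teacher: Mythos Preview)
Your proposal is correct and follows essentially the same route as the paper: linearise via Lemma~\ref{lem:phi(p)-phi(q)} on $\mathcal D$, split $C$ into the $(x,\theta)$/$y$ blocks, use the cone condition $\|\delta_{x,\theta}\|\le M\|\delta_y\|$ together with Lemmas~\ref{lem:norn-l-approx}, \ref{lem:m-to-ml} and Corollary~\ref{cor:m-to-l} to get $\|\pi_y\delta(t)\|\ge(1-t\mu(M)-O(t^2))\|\delta_y\|$ and $\|\pi_{x,\theta}\delta(t)\|\le(M-tM\xi(M)+O(t^2))\|\delta_y\|$, then conclude \eqref{eq:Js-contraction} and \eqref{eq:Js-alignment} from $\mu(M)<0$ and $\xi(M)>\mu(M)$. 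The only place where your sketch is slightly off is the tentative upper bound written as $(1-t\xi(M))\|\delta_{x,\theta}\|+\ldots$: in the paper's (and the correct) arrangement one first bounds the diagonal part by $(1-t\,m_l(A))\|\delta_{x,\theta}\|$ and the cross term by $t\|B\|\|\delta_y\|$, and \emph{then} substitutes $\|\delta_{x,\theta}\|\le M\|\delta_y\|$ to assemble the factor $M(1-t\xi(M))$ in front of $\|\delta_y\|$---your ``more precisely'' clause already indicates you have this in mind.
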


\begin{proof}
Take any $q\in\mathcal{D}$ and $p\in J_{s}\left( q,M\right) \cap \mathcal{D}%
, $ $p\neq q$, and let $t>0$ be such that $\{\Phi_{-s}(p),\Phi
_{-s}(q):s\in\lbrack0,t]\}\subset\mathcal{D}$.

Since $p\in J_{s}\left( q,M\right) ,$ 
\begin{equation}
\left\Vert \pi_{x,\theta}\left( p-q\right) \right\Vert \leq M\left\Vert
\pi_{y}\left( p-q\right) \right\Vert .  \label{eq:tmp-cone-M-approx}
\end{equation}
As a consequence 
\begin{equation}
\left\Vert p-q\right\Vert \leq\sqrt{M^{2}+1}\left\Vert
\pi_{y}(p-q)\right\Vert .  \label{eq:tmp-p-q-norm-estim}
\end{equation}
Therefore since $p\neq q$ we must have 
\begin{equation*}
\left\Vert \pi_{y}(p-q)\right\Vert \neq0.
\end{equation*}

On the other hand, from Lemma \ref{lem:phi(p)-phi(q)} it follows that for
some $A\in\left[ \frac{\partial f_{y}}{\partial y}\left( \mathcal{D}\right) %
\right] $ and $B\in\left[ \frac{\partial f_{y}}{\partial\left(
x,\theta\right) }\left( \mathcal{D}\right) \right] $%
\begin{align*}
\pi_{y}\left( \Phi_{-t}\left( p\right) -\Phi_{-t}\left( q\right) \right) &
=\pi_{y}\left( p-q\right) -tA\pi_{y}\left( p-q\right) -tB\pi_{x,\theta
}\left( p-q\right) \\
& \quad+\pi_{y}g(t,p,q),
\end{align*}
where $g$ satisfies $\left\Vert g(t,p,q)\right\Vert
\leq\gamma_{1}t^{2}\left\Vert p-q\right\Vert $ for some constant $%
\gamma_{1}>0$. Observe that from (\ref{eq:tmp-p-q-norm-estim}) we have $%
\left\Vert g(t,p,q)\right\Vert \leq\gamma_{1}t^{2}\left\Vert
\pi_{y}(p-q)\right\Vert $. From the above, and by using (\ref%
{eq:tmp-cone-M-approx}) in the second line, Lemma \ref{lem:m-to-ml} in the
third line, Corollary \ref{cor:m-to-l} in the fourth line, and (\ref%
{eq:mu-def}) in the last line, we obtain 
\begin{align}
\left\Vert \pi_{y}\left( \Phi_{-t}\left( p\right) -\Phi_{-t}\left( q\right)
\right) \right\Vert & \geq\left\Vert \left( Id-tA\right) \pi_{y}\left(
p-q\right) \right\Vert -t\left\Vert B\right\Vert \left\Vert
\pi_{x,\theta}\left( p-q\right) \right\Vert  \notag \\
& \quad-\gamma_{1}t^{2}\left\Vert \pi_{y}(p-q)\right\Vert  \notag \\
& \geq\left( m\left( Id-tA\right) -tM\left\Vert B\right\Vert \right)
\left\Vert \pi_{y}\left( p-q\right) \right\Vert  \notag \\
& \quad-\gamma_{1}t^{2}\left\Vert \pi_{y}(p-q)\right\Vert  \notag \\
& \geq\left( 1+tm_{l}\left( -A\right) -tM\left\Vert B\right\Vert \right)
\left\Vert \pi_{y}\left( p-q\right) \right\Vert  \notag \\
& \quad-\gamma_{2}t^{2}\left\Vert \pi_{y}(p-q)\right\Vert  \notag \\
& =\left( 1+t\left( -l\left( A\right) -M\left\Vert B\right\Vert \right)
\right) \left\Vert \pi_{y}\left( p-q\right) \right\Vert  \notag \\
& \quad-\gamma_{2}t^{2}\left\Vert \pi_{y}(p-q)\right\Vert  \notag \\
& \geq\left( 1-t\mu\left( M\right) -\gamma_{2}t^{2}\right) \left\Vert
\pi_{y}\left( p-q\right) \right\Vert ,  \label{eq:y-projection}
\end{align}
where, in the light of Lemma \ref{lem:m-to-ml}, the third inequality is
satisfied for any $t\in\lbrack0,t_{0}]$, where $t_{0}>0$. Taking a fixed $%
c\in\left( 0,-\mu\left( M\right) \right) $, we see that there exists $%
t_{M}>0 $ (independent of $p$ and $q$) such that for any $t\in(0,t_{M})$ 
\begin{equation*}
\left\Vert \pi_{y}\left( \Phi_{-t}\left( p\right) -\Phi_{-t}\left( q\right)
\right) \right\Vert >\left( 1+tc\right) \left\Vert \pi_{y}\left( p-q\right)
\right\Vert ,
\end{equation*}
which proves (\ref{eq:Js-contraction}).

Again from Lemma \ref{lem:phi(p)-phi(q)}, we know that for some $A\in\left[ 
\frac{\partial f_{x,\theta}}{\partial\left( x,\theta\right) }\left( \mathcal{%
D}\right) \right] $ and $B\in\left[ \frac{\partial f_{x,\theta}}{\partial y}%
\left( \mathcal{D}\right) \right] $%
\begin{align*}
\pi_{x,\theta}\left( \Phi_{-t}\left( p\right) -\Phi_{-t}\left( q\right)
\right) & =\pi_{x,\theta}\left( p-q\right) -tA\pi_{x,\theta}\left(
p-q\right) -tB\pi_{y}\left( p-q\right) \\
& \quad+\pi_{x,\theta}g(t,p,q).
\end{align*}
Hence, using (\ref{eq:tmp-cone-M-approx}) in the second line, Lemma \ref%
{lem:norn-l-approx} in the third line, Corollary \ref{cor:m-to-l} in the
fourth line and (\ref{eq:xi-def}) in the last line,%
\begin{align}
\left\Vert \pi_{x,\theta}\left( \Phi_{-t}\left( p\right) -\Phi_{-t}\left(
q\right) \right) \right\Vert & \leq\left\Vert Id-tA\right\Vert \left\Vert
\pi_{x,\theta}\left( p-q\right) \right\Vert +t\left\Vert B\right\Vert
\left\Vert \pi_{y}\left( p-q\right) \right\Vert  \notag \\
& \quad+\gamma_{1}t^{2}\left\Vert \pi_{y}(p-q)\right\Vert  \notag \\
& \leq\left( \left\Vert Id-tA\right\Vert M+t\left\Vert B\right\Vert \right)
\left\Vert \pi_{y}\left( p-q\right) \right\Vert  \notag \\
& \quad+\gamma_{1}t^{2}\left\Vert \pi_{y}(p-q)\right\Vert  \notag \\
& =M\left( \left( 1+tl\left( -A\right) \right) M+\frac{1}{M}t\left\Vert
B\right\Vert \right) \left\Vert \pi_{y}\left( p-q\right) \right\Vert  \notag
\\
& \quad+\gamma_{2}t^{2}\left\Vert \pi_{y}(p-q)\right\Vert  \notag \\
& =M\left( \left( 1-tm_{l}\left( A\right) \right) +\frac{1}{M}t\left\Vert
B\right\Vert \right) \left\Vert \pi_{y}\left( p-q\right) \right\Vert  \notag
\\
& \quad+\gamma_{2}t^{2}\left\Vert \pi_{y}(p-q)\right\Vert  \notag \\
& \leq\left( M-tM\xi\left( M\right) +\gamma_{2}t^{2}\right) \left\Vert
\pi_{y}\left( p-q\right) \right\Vert ,  \label{eq:x-theta-projection}
\end{align}
where, in the light of Lemma \ref{lem:norn-l-approx}, the third inequality
is satisfied for any $t\in\lbrack0,t_{0}]$, where $t_{0}>0$. Since $%
\xi\left( M\right) >\mu\left( M\right) ,$ by combining (\ref{eq:y-projection}%
) with (\ref{eq:x-theta-projection}), we see that for sufficiently small $t$,%
\begin{equation*}
\frac{\left\Vert \pi_{x,\theta}\left( \Phi_{-t}\left( p\right) -\Phi
_{-t}\left( q\right) \right) \right\Vert }{\left\Vert \pi_{y}\left(
\Phi_{-t}\left( p\right) -\Phi_{-t}\left( q\right) \right) \right\Vert }\leq%
\frac{\left( M-tM\xi\left( M\right) +\gamma_{2}t^{2}\right) \left\Vert
\pi_{y}\left( p-q\right) \right\Vert }{\left( 1-t\mu\left( M\right)
-\gamma_{2}t^{2}\right) \left\Vert \pi_{y}\left( p-q\right) \right\Vert }%
\leq M.
\end{equation*}
This means that%
\begin{equation*}
\left\Vert \pi_{\theta,x}\left( \Phi_{-t}\left( p\right) -\Phi_{-t}\left(
q\right) \right) \right\Vert \leq M\left\Vert \pi_{y}\left( \Phi _{-t}\left(
p\right) -\Phi_{-t}\left( q\right) \right) \right\Vert ,
\end{equation*}
which proves (\ref{eq:Js-alignment}).
\end{proof}

We now return to studying (\ref{eq:ode-theta}). Let us assume that the
system has a smooth family of hyperbolic fixed points $%
p_{\theta}^{\ast}\in \mathrm{int}D$, where $D= \overline {B}_{u}\left(
R\right) \times\overline{B}_{s}\left( R\right) $.
%\marginpar{dodalam int} 
 Let us also assume that for any given $\theta\in\Theta$
assumptions of Theorem \ref{th:Wu-rates} are satisfied. Let $w^{u}$ be the
parameterisation from Theorem \ref{th:Wu-rates-theta}.

\begin{lemma}
\label{lem:wu-in-cone}If assumptions of Theorem \ref{th:Wu-rates-theta} are
satisfied and 
\begin{equation*}
\mu\left( M\right) <0,\qquad\text{\qquad}\xi\left( M\right) >\mu\left(
M\right) ,
\end{equation*}
then for any $x_{1},x_{2}\in\overline{B}_{u}\left( R\right) $ and $%
\theta_{1},\theta_{2}\in\Theta$,%
\begin{equation}
\left( x_{1},w^{u}\left( x_{1},\theta_{1}\right) ,\theta_{1}\right) \in
J_{cu}\left( \left( x_{2},w^{u}\left( x_{2},\theta_{2}\right) ,\theta
_{2}\right) ,1/M\right) .  \label{eq:cu-jet-alignment}
\end{equation}
\end{lemma}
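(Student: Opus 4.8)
The goal is to show that any two points on the graph of $w^{u}$ (in the parameter-extended space) lie in the cone $J_{cu}(\cdot,1/M)$ of one another. The natural strategy is a contradiction argument driven by backward-time propagation, using Lemma~\ref{lem:param-cone-prop} as the engine. Recall from~(\ref{eq:Jcu-complement}) that the complement of $J_{cu}(q,1/M)$ is precisely $\mathrm{int}\,J_{s}(q,M)$, so~(\ref{eq:cu-jet-alignment}) \emph{fails} exactly when the point $p_{1}:=(x_{1},w^{u}(x_{1},\theta_{1}),\theta_{1})$ lies in the open stable cone $\mathrm{int}\,J_{s}(p_{2},M)$ of $p_{2}:=(x_{2},w^{u}(x_{2},\theta_{2}),\theta_{2})$, with $p_{1}\neq p_{2}$. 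So I would assume this and derive a contradiction.

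\textbf{Key steps.} First, note that both $p_{1}$ and $p_{2}$ lie on unstable manifolds of fixed points in the extended system~(\ref{eq:ode-extended}) (where $\theta'=0$, so $\theta$ is preserved and each $W^{u}_{\theta}\times\{\theta\}$ is invariant). By Theorem~\ref{th:Wu-rates} (applied to each parameter slice) and the definition of $W^{u}_{\theta}$ in~(\ref{eq:Wu-def}), the backward trajectories $\Phi_{-t}(p_{1})$ and $\Phi_{-t}(p_{2})$ remain in $\mathcal{D}=D\times\Theta$ for all $t\geq 0$ and converge to the respective fixed points $(p^{\ast}_{\theta_{1}},\theta_{1})$, $(p^{\ast}_{\theta_{2}},\theta_{2})$ as $t\to-\infty$. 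Since the whole backward orbits stay in $\mathcal{D}$, I can iterate Lemma~\ref{lem:param-cone-prop} over consecutive time-steps of length $t_{M}$: starting from $p_{1}\in J_{s}(p_{2},M)\cap\mathcal{D}$, the alignment conclusion~(\ref{eq:Js-alignment}) propagates the cone inclusion to all $t>0$, i.e. $\Phi_{-t}(p_{1})\in J_{s}(\Phi_{-t}(p_{2}),M)$, and the contraction estimate~(\ref{eq:Js-contraction}), compounded over $\lfloor t/t_{M}\rfloor$ steps, gives
\begin{equation*}
\left\Vert \pi_{y}\!\left(\Phi_{-t}(p_{1})-\Phi_{-t}(p_{2})\right)\right\Vert \;\geq\;(1+ct_{M})^{\lfloor t/t_{M}\rfloor}\left\Vert \pi_{y}(p_{1}-p_{2})\right\Vert \;\xrightarrow[t\to\infty]{}\;\infty,
\end{equation*}
provided $\pi_{y}(p_{1}-p_{2})\neq 0$, which holds since $p_{1}\in\mathrm{int}\,J_{s}(p_{2},M)$ forces $\|\pi_{y}(p_{1}-p_{2})\|\geq\|\pi_{x,\theta}(p_{1}-p_{2})\|/M$ and $p_{1}\neq p_{2}$. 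But the left-hand side is bounded by $\mathrm{diam}\,\mathcal{D}<\infty$, a contradiction. Hence~(\ref{eq:cu-jet-alignment}) holds.

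\textbf{Main obstacle.} The delicate point is the iteration itself: Lemma~\ref{lem:param-cone-prop} is stated only for times in $(0,t_{M})$ and requires at each step that the backward orbit of \emph{both} points over the next window of length $t_{M}$ stays in $\mathcal{D}$. This is exactly guaranteed by the fact that $p_{1},p_{2}$ are on unstable sets in $\mathcal{D}$, so this causes no trouble, but one must be careful to re-apply the lemma with the new base point $q=\Phi_{-kt_{M}}(p_{2})$ and the new point $p=\Phi_{-kt_{M}}(p_{1})$, checking that the cone condition~(\ref{eq:Js-alignment}) from the previous step provides the hypothesis $p\in J_{s}(q,M)$ for the next. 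A minor bookkeeping subtlety is handling the leftover time $t-\lfloor t/t_{M}\rfloor t_{M}<t_{M}$, but since the contraction factor there is at least $1$, it only helps. One should also observe that the edge case $p_{1}=p_{2}$ is trivially in the cone, and if $p_{1}\in J_{cu}(p_{2},1/M)$ already there is nothing to prove — so the contradiction setup genuinely covers all remaining cases.
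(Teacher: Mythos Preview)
Your proposal is correct and follows essentially the same route as the paper's proof: assume failure of~(\ref{eq:cu-jet-alignment}), use~(\ref{eq:Jcu-complement}) to land in $\mathrm{int}\,J_{s}(q,M)$, then iterate Lemma~\ref{lem:param-cone-prop} backward in time along the two unstable-manifold orbits (which stay in $\mathcal{D}$) to force $\|\pi_{y}(\Phi_{-t}(p_{1})-\Phi_{-t}(p_{2}))\|\to\infty$, contradicting boundedness. Your bookkeeping of the iteration (re-using~(\ref{eq:Js-alignment}) as the hypothesis at each step and compounding the $(1+ct_{M})$ factors) is in fact more carefully spelled out than the paper's sketch, which simply writes the growth as $(1+ct)$ and remarks that one applies the lemma ``with small $t$ several times''.
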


\begin{proof}
Let $q_{1}=\left( x_{1},w^{u}\left( x_{1},\theta_{1}\right) ,\theta
_{1}\right) $ and $q_{2}=\left( x_{2},w^{u}\left( x_{2},\theta_{2}\right)
,\theta_{2}\right) $. If (\ref{eq:cu-jet-alignment}) does not hold, then by (%
\ref{eq:Jcu-complement}) 
\begin{equation*}
q_{1}\in\mathrm{int}J_{s}\left( q_{2},M\right) .
\end{equation*}
Note that then%
\begin{equation*}
0\leq\left\Vert \pi_{x,\theta}\left( q_{1}-q_{2}\right) \right\Vert
<M\left\Vert \pi_{y}\left( q_{1}-q_{2}\right) \right\Vert .
\end{equation*}

By Lemma \ref{lem:param-cone-prop}, since $\Phi_{-t}\left( q_{i}\right) \in
W_{\theta_{i}}^{u}\times\{\theta_{i}\}\subset D\times\Theta$, 
%\marginpar{zmienilam $U$ na $D$} 
we would therefore have%
\begin{equation}
\Phi_{-t}\left( q_{1}\right) \in J_{s}\left( \Phi_{-t}\left( q_{2}\right)
,M\right) ,  \label{eq:tmp-jts-align-big-t}
\end{equation}
for all $t\in\mathbb{R}_{+}$ (we can apply Lemma \ref{lem:param-cone-prop}
with small $t$ several times to obtain (\ref{eq:tmp-jts-align-big-t}) for
large $t$). Also by Lemma \ref{lem:param-cone-prop} we would have%
\begin{equation*}
\left\Vert \pi_{y}\left( \Phi_{-t}\left( q_{1}\right) -\Phi_{-t}\left(
q_{2}\right) \right) \right\Vert >\left( 1+ct\right) \left\Vert \pi
_{y}\left( q_{1}-q_{2}\right) \right\Vert \rightarrow\infty\qquad\text{as }%
t\rightarrow\infty.
\end{equation*}
This contradicts the fact that $\Phi_{-t}\left( p\right) ,\Phi_{-t}\left(
q\right) \in\mathcal{D}$, hence (\ref{eq:cu-jet-alignment}) must hold true.
\end{proof}

We are now ready to prove Theorem \ref{th:param-cone-prop}.

\begin{proof}[Proof of Theorem \protect\ref{th:param-cone-prop}]
By Theorem \ref{th:Wu-rates-theta}, $w^{u}$ is well defined. By Lemma \ref%
{lem:wu-in-cone},%
\begin{equation*}
\left\Vert w^{u}\left( x,\theta_{1}\right) -w^{u}\left( x,\theta _{2}\right)
\right\Vert \leq1/M\left\Vert \left( x,\theta_{1}\right) -\left(
x,\theta_{2}\right) \right\Vert =1/M\left\Vert \theta_{1}-\theta
_{2}\right\Vert ,
\end{equation*}
which implies the claim.
\end{proof}

\section{Computer assisted proof of the Shil'nikov connection in the Lorenz
84 system \label{sec:CAP}}

To apply our method and conduct a computer assisted proof we follow the
steps:

\begin{enumerate}
\item Using Theorem \ref{th:interval-Newton}, establish an enclosure of the
family of hyperbolic fixed points, and following the method from section \ref%
{sec:eigenvalues}, establish bounds on the eigenvalues of the Jacobian at
the fixed points to verify hyperbolicity.

\item \label{step:Wu} In local coordninates around the fixed points, using
Theorem \ref{th:Wu-rates-theta}, establish the bounds on the unstable
manifolds.

\item By changing sign of the vector field, using the same procedure as in
step \ref{step:Wu}, establish bounds on the stable manifolds.

\item Using Theorem \ref{th:param-cone-prop}, establish bounds on the
dependence of the manifolds on the parameter.

\item Propagate the bounds on the unstable manifold along the flow, and
establish the homoclinic intersection using Theorem \ref%
{th:homoclinic-existence}.
\end{enumerate}

For our computer assisted proof we consider the Lorenz 84 system (\ref%
{eq:L84}) with the parameters $a=\frac{1}{4}$,$b=4$, $F=4$, and 
\begin{equation}
G\in\left[ G_{l},G_{r}\right] =\left[ 0.0752761095,0.07527611625\right] .
\label{eq:G-again}
\end{equation}

We first use the interval Newton method (Theorem \ref{th:interval-Newton})
to establish an enclosure of the fixed points: 
\begin{equation*}
p_{G}^{\ast}\in\left( 
\begin{array}{l}
\lbrack3.9999144633,3.9999144654] \\ 
\lbrack-0.0008521960,-0.0008521939] \\ 
\lbrack0.0045450712,0.0045450733]%
\end{array}
\right) ,\qquad\text{for all }G\in\left[ G_{l},G_{r}\right] .
\end{equation*}
Next we compute a bound on the derivative of the vector field at the fixed
points, and using the method from section \ref{sec:eigenvalues} establish
that for all $G\in\left[ G_{l},G_{r}\right] $ the eigenvalues are:%
\begin{align*}
\lambda_{1} & \in\left[ 0.249988,0.249991\right] , \\
\mathrm{Re}\lambda_{2} & \in\left[ -2.999911,-2.999908\right] ,\qquad\mathrm{%
Im}\lambda_{2}\in\left[ 15.999657,15.999660\right] , \\
\mathrm{Re}\lambda_{3} & \in\left[ -2.999911,-2.999908\right] ,\qquad\mathrm{%
Im}\lambda_{3}\in\left[ -15.999660,-15.999657\right] .
\end{align*}
This establishes hyperbolicity.

To obtain bounds for the stable/unstable manifolds, we use the local
coordinates $(x,y_{1},y_{2})$, 
\begin{equation*}
\left( X,Y,Z\right) =C\left( x,y_{1},y_{2}\right) +q_{0},
\end{equation*}
with, 
\begin{align*}
q_{0} & =\left(
3.9999144643281,-0.00085219497131102,0.0045450722448356\right) , \\
C & =\left( 
\begin{array}{lll}
1 & -0.00016604653053618 & 0.00040407899883959 \\ 
0.00016384655297642 & -0.28235213046095 & 0.71764786953905 \\ 
-0.0011562746220118 & 0.71764798264861 & 0.28235189601999%
\end{array}
\right) .
\end{align*}

The $q_{0}$ is close to the fixed points of (\ref{eq:L84}). (Depending on
the choice of $G$ the fixed point shifts slightly with the parameter, but we
keep $q_{0}$ fixed.) Coordinates $x,y_{1},y_{2}$ align the system so that $x$
is the (rough) unstable direction, and $y_{1},y_{2}$ are (roughly) stable.

In these local coordinates, we use the interval Newton method (Theorem \ref%
{th:interval-Newton}) to obtain enclosures of the fixed points for
parameters $G$ in (\ref{eq:G-again}). In the local coordinates, the fixed
points are close to the origin. (See Figure \ref{Fig:Wu}; the cones emanate
from the fixed points.) We then choose 
\begin{equation*}
D=\overline{B}_{u}\left( R\right) \times\overline{B}_{s}\left( R\right) ,
\end{equation*}
with $R=10^{-4}$, and use Theorem \ref{th:Wu-rates-theta} to obtain an
enclosure of the unstable manifold $W^{u}$. In our computer assisted proof,
we have a Lipschitz bound $L_{u}=10^{-5}$ for the slope of the unstable
manifold for all parameters (\ref{eq:G-again}). See Figure \ref{Fig:Wu}.
(Note the scale on the axes. The enclosure is in fact quite sharp.)

\begin{figure}[ptb]
\begin{center}
\includegraphics[height=6cm]{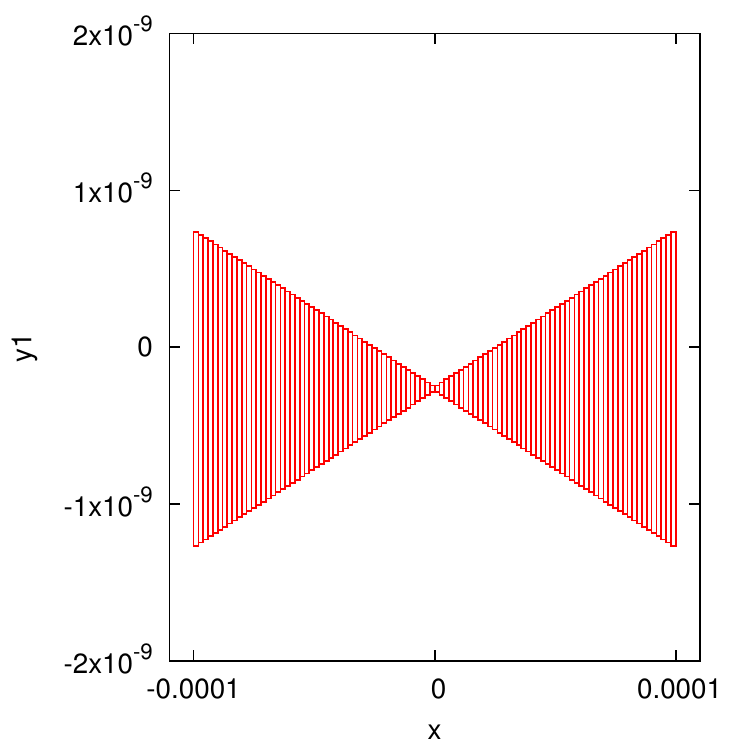}%
\includegraphics[height=6cm]{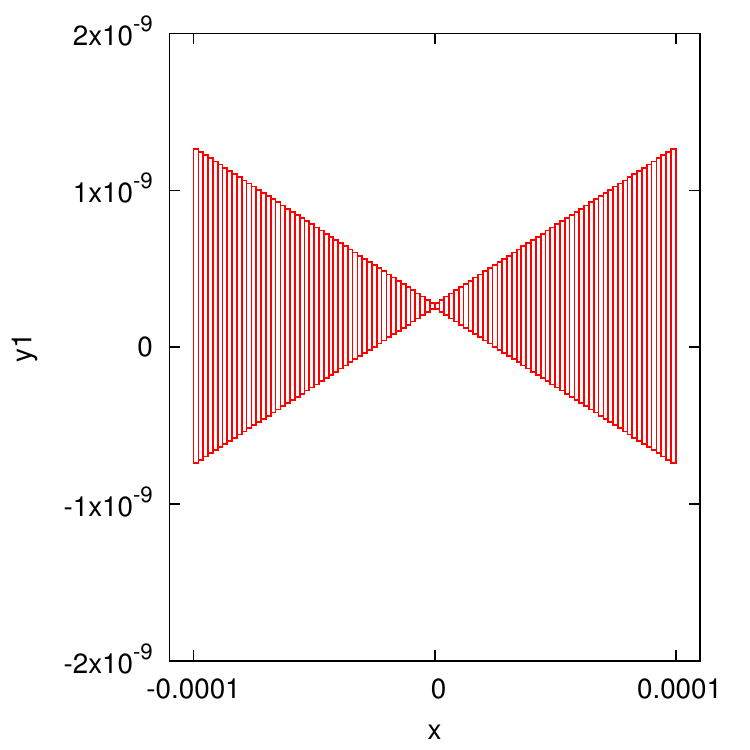}
\end{center}
\caption{The projection onto $x,y_{1}$ coordinates of the bounds on $%
W_{G}^{u}$. On the left we have the bound for $G=G_{l}$ (the left end of our
parameter interval (\protect\ref{eq:G-again})), and on the right for $%
G=G_{r} $.}
\label{Fig:Wu}
\end{figure}

To establish the bounds for the stable manifold, we consider the vector
field with reversed sign (which makes the stable manifold become unstable),
and apply Theorem \ref{th:Wu-rates-theta} once again. Here we have obtained
a Lipschitz bound $L_{s}=10^{-3}$. In Figure \ref{fig:Ws} we see the bound
on the enclosure. The two points on the plot are the $\Phi_{T}\left(
p_{G}^{u},G\right) $ for $G=G_{l}$ and $G=G_{r}$ for the choice of $T=50$
(see (\ref{eq:pu-theta-def}) for the definition of $p_{G}^{u}$). We do not
plot these as boxes, since our computer assisted bound gives their size of
order $10^{-13},$ and such boxes would be invisible on the plot. Note that
Figure \ref{fig:Ws} corresponds to the sketch from Figure \ref%
{fig:homoclinic-2}. In Figure \ref{fig:Ws} we have the projection onto $%
x,y_{1}$ coordinates of what happens inside of the set $D$, without plotting
the trajectory along the unstable manifold. 
%\marginpar{
%moze by dodac wykres tej trajektorii do rysunku...}

We use the rigorous estimates for $\Phi_{T}\left( p_{G_{l}}^{u},G_{l}\right) 
$ and $\Phi_{T}\left( p_{G_{r}}^{u},G_{r}\right) $ to compute the following
bounds (see (\ref{eq:h-function-def}) for the definition of the function $h$%
,)%
\begin{align*}
h\left( G_{l}\right) & \in[1.193520892609e-07,1.2017042212622e-07], \\
h\left( G_{l}\right) & \in\left[ -1.1920396632516e-07,-1.1838527119022e-07%
\right] .
\end{align*}
We also make sure that $\Phi_{T}\left( p_{G}^{u},G\right) \in D$ for all $G%
\in[G_{l},G_{r}]$. We see that assumption (\ref{eq:Bolzano-assmpt}) of
Theorem \ref{th:homoclinic-existence} is satisfied, which means that we have
a Shil'nikov homoclinic connection for at least one of the parameters $G\in%
\left[ G_{l},G_{r}\right] $.

\begin{figure}[ptb]
\begin{center}
\includegraphics[height=6cm]{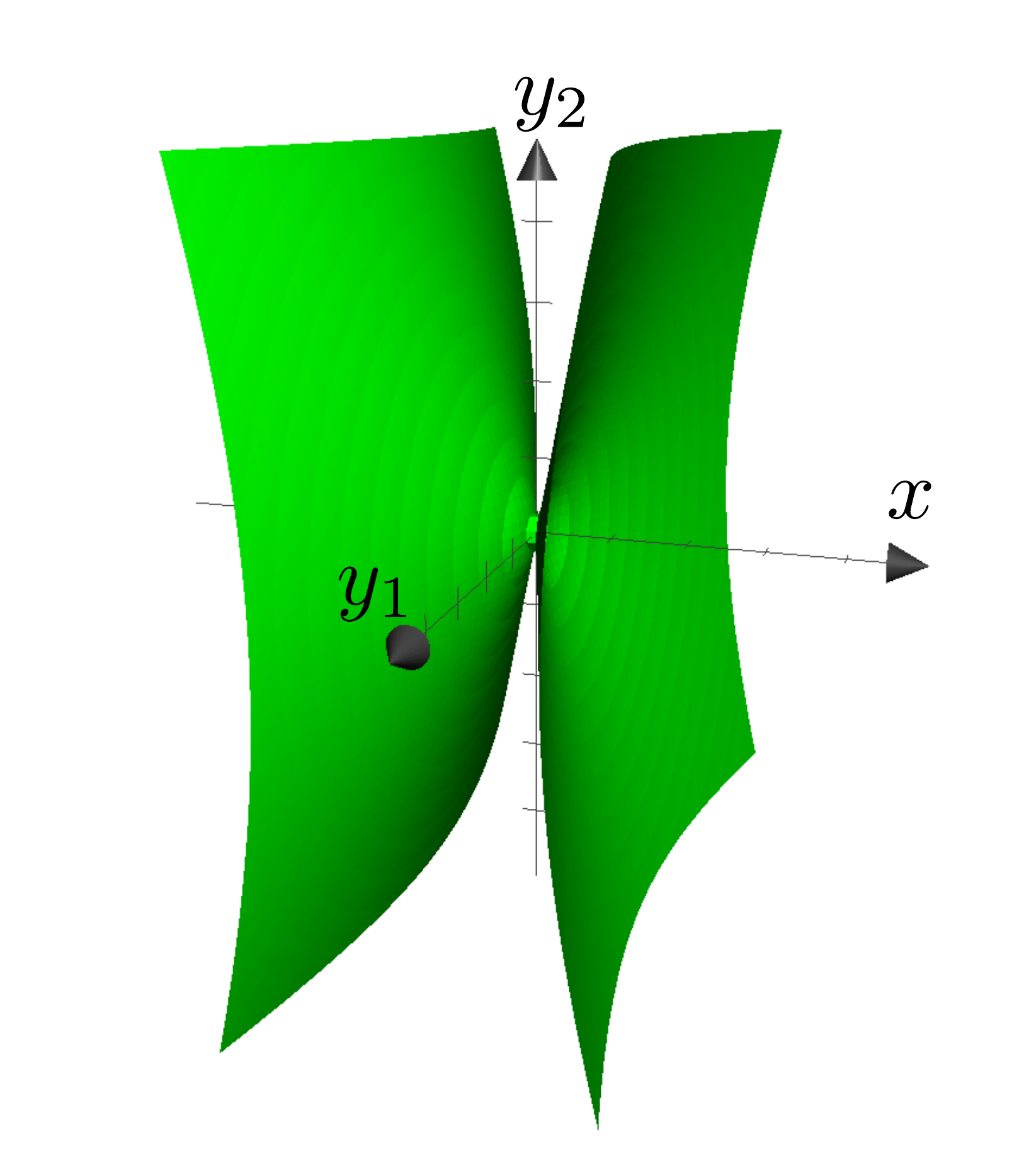}%
\includegraphics[height=6cm]{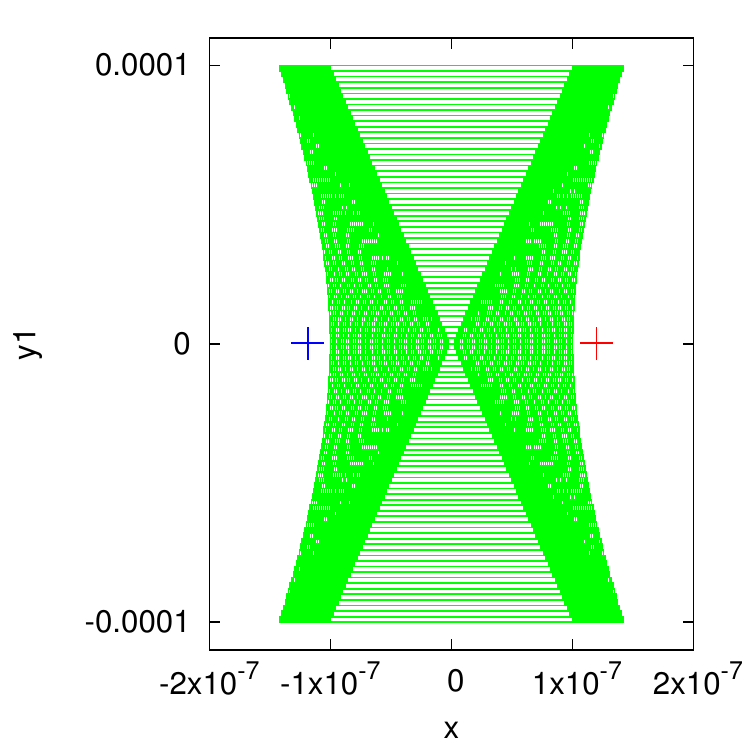}
\end{center}
\caption{The bound on $W_{G}^{s}$, for all parameters $G$ from (\protect\ref%
{eq:G-again}). On the left we have a non-rigorous plot, to illustrate the
shape of our bound in three dimensions. On the right, we have a projection
onto the $x,y_{1}$ coordinates of the rigorous, computer assisted enclosure.
The two points depicted on the right hand side plot are $\Phi_{T}\left(
p_{G_{l}}^{u},G_{l}\right) $ (on the right, in red) and $\Phi_{T}\left(
p_{G_{r}}^{u},G_{r}\right) $ (on the left, in blue). }
\label{fig:Ws}
\end{figure}

To establish the bound on $h^{\prime}\left( G\right) $, we first use Theorem %
\ref{th:param-cone-prop} to establish an estimate for $\frac{d}{dG}%
w^{u}\left( x,G\right) $. In our computer assisted proof we use Theorem \ref%
{th:param-cone-prop} with parameter $M=M_{u}:=2000$. We then use Theorem \ref%
{th:param-cone-prop} once again to establish bounds for $\frac{d}{dG}%
w^{s}\left( x,G\right) $. (Here, again, we reverse the sign of the vector
field to make the manifold unstable.) We establish the bound with $%
M_{s}=500. $ We then propagate the bound for $\frac{d}{dG}w^{u}\left(
x,G\right) $ using rigorous, computer assisted integration, to obtain the
bound%
\begin{equation*}
h^{\prime}\left( G\right) \in\left[ -36.12,-34.57\right] ,\qquad\text{for
all }G\in\left[ G_{l},G_{r}\right] .
\end{equation*}
This, by Theorem \ref{th:homoclinic-existence}, establishes the uniqueness
of the intersection parameter in $\left[ G_{l},G_{r}\right] $.

\begin{remark}
We do not rule out a possiblility that for some parameter $G\in\left[
G_{l},G_{r}\right] $ the trajectory $\Phi_{t}\left( p_{G}^{u},G\right) $
could exit $D$ and return again to intersect $W_{G}^{s}$. We have not done
such investigation, which would require a global consideration of the
system. What we establish is that we have a single parameter for which the
homoclinic orbit behaves as the one in Figure \ref{fig:homoclinic}.
\end{remark}

The computer assisted proof has been done entirely by using the CAPD%
\footnote{%
computer assisted proofs in dynamics: http://capd.ii.uj.edu.pl/} package and
took 4 seconds on a single core 3Ghz Intel i7 processor.

\section{Appendix\label{sec:appendix}}

\begin{proof}[Proof of Lemma \protect\ref{lem:phi(p)-phi(q)}]
%\marginpar{
%pozmienialam troche dowod (dostosowalam do slabszych zalozen w
%lemacie+wywalilam moduly)} 
Let us take any $t>0$ and any $p,q\in\mathbb{R}%
^{n}$ such that $\{\Phi_{-s}(p),\Phi_{-s}(q):s\in [ 0,t ]\}\subset U$.
Observe that since $U$ is convex, for any $u\in[0,1],$ \[Df\left( q+u\left(p-q\right) \right) \in [Df(U)].\] Moreover,
\begin{equation*}
f\left( p\right) -f\left( q\right) =\int_{0}^{1}Df\left( q+u\left(
p-q\right) \right) du\left( p-q\right) .
\end{equation*}
Using this we have 
\begin{align}
& \Phi_{-t}\left( p\right) -\Phi_{-t}\left( q\right) \notag \\
& =p-q-\int_{0}^{t}f\left( \Phi_{-s}\left( p\right) \right) -f\left(
\Phi_{-s}\left( q\right) \right) ds  \notag \\
& =p-q  \notag  \\
& \ -\int_{0}^{t}\int_{0}^{1}Df\left( \Phi_{-s}\left( q\right) +u\left(
\Phi_{-s}\left( p\right) 
 -\Phi_{-s}\left( q\right) \right) \right) du\left(
\Phi_{-s}\left( p\right) -\Phi_{-s}\left( q\right) \right) ds  \notag \\
& =p-q-\int_{0}^{t}C(s)\left( \Phi_{-s}\left( p\right) -\Phi_{-s}\left(
q\right) \right) ds,  \label{eq:app-expansion}
\end{align}
where $\{C(s)\}$ is a family of matrixes defined as%
\begin{equation*}
C(s)=\int_{0}^{1}Df\left( \Phi_{-s}\left( q\right) +u\left( \Phi_{-s}\left(
p\right) -\Phi_{-s}\left( q\right) \right) \right) du\in\left[ Df\left(
U\right) \right] .
\end{equation*}

Since $f$ is $C^{1}$ in $U$ and $U$ is compact, there exists a constant $L>0$
such that for any $p\in U$ 
\begin{equation*}
\left\Vert Df(p)\right\Vert \leq L.
\end{equation*}
Using standard Gronwall estimates gives that%
\begin{equation}
\Phi_{-s}\left( p\right) -\Phi_{-s}\left( q\right) =p-q+h\left( s,p,q\right)
,  \label{eq:app-Lip-bd}
\end{equation}
where $h$ satisfies 
\begin{equation*}
\left\Vert h\left( s,p,q\right) \right\Vert \leq\left( e^{s L}-1\right)
\left\Vert p-q\right\Vert .
\end{equation*}

We can return to (\ref{eq:app-expansion}) and substitute (\ref{eq:app-Lip-bd}%
) into the term under the integral to obtain 
\begin{align*}
\Phi_{-t}\left( p\right) -\Phi_{-t}\left( q\right) &
=p-q-\int_{0}^{t}C(s)\left( p-q+h\left( s,p,q\right) \right) ds \\
& =p-q-tC\left( p-q\right) +g\left( t,p,q\right) ,
\end{align*}
for%
\begin{equation*}
C:=\frac{1}{t}\int_{0}^{t}C(s)ds\in\left[ Df\left( U\right) \right] ,
\end{equation*}
and%
\begin{equation*}
g\left( t,p,q\right) :=-\int_{0}^{t}C(s)h\left( s,p,q\right) ds.
\end{equation*}
Observing that%
\begin{align*}
\left\Vert g\left( t,p,q\right) \right\Vert & \leq\max_{s\in\left[ 0,t %
\right] }\left\Vert C\left( s\right) \right\Vert \left\Vert p-q\right\Vert
\int_{0}^{t}\left( e^{s L}-1\right) ds \\
& =\max_{s\in\left[ 0,t \right] }\left\Vert C\left( s\right) \right\Vert
\left\Vert p-q\right\Vert \frac{1}{L}\left( e^{Lt }-Lt -1\right) \\
& \leq M\left\Vert p-q\right\Vert t^{2},
\end{align*}
gives the claim.
\end{proof}

%\bibliographystyle{siamplain}
%\bibliography{references}
\end{document}